\documentclass{article}

\usepackage{graphicx}
\usepackage{amsmath}
\usepackage{amssymb}
\usepackage{authblk}
\usepackage{amsthm}

\usepackage{lscape}
\usepackage{pdflscape}

\usepackage{stmaryrd}
\usepackage{mathdots}

\usepackage[geometry]{ifsym}
\usepackage{cmll}

\usepackage{comment}
\usepackage{amscd}
\usepackage{amssymb}

\usepackage{mathrsfs}

\usepackage{color}
\usepackage{xspace}
\usepackage{bussproofs}
\EnableBpAbbreviations

\usepackage{mathtools}

\usepackage{bpextra}

\usepackage{enumerate}

\usepackage{centernot}

\usepackage{hyperref}

\usepackage{cleveref}

\usepackage{caption}

\usepackage{multirow}

\usepackage{array}
\newcolumntype{C}[1]{>{\centering\arraybackslash}p{#1}}
\newcolumntype{L}[1]{>{\arraybackslash}p{#1}}

\usepackage{color}
\usepackage{cancel}
\usepackage{verbatim,cmll}
\usepackage{setspace}
\usepackage{lscape}
\usepackage{latexsym,relsize}
\usepackage{multicol}

\usepackage[position=top]{subfig}
\usepackage{tikz}

\usepackage{txfonts}

\usepackage{multicol}
\usetikzlibrary{arrows}
\usetikzlibrary{matrix}
\usetikzlibrary{patterns}
\usetikzlibrary{shapes}

\newtheorem{thm}{Theorem}[section]
\newtheorem{corollary}[thm]{Corollary}
\newtheorem{lemma}[thm]{Lemma}
\newtheorem{prop}[thm]{Proposition}

\newtheorem{notation}[thm]{Notation}

\newtheorem{example}[thm]{Example}
\newtheorem{definition}[thm]{Definition}
\newtheorem{remark}[thm]{Remark}

\newcommand{\FH}{\hat{f}}

\newcommand{\GC}{\check{g}}

\newcommand{\FHS}{\hat{f}^{\,\sharp}}
\newcommand{\FCS}{\check{f}^{\,\sharp}}
\newcommand{\GHF}{\hat{g}^{\,\flat}}
\newcommand{\GCF}{\check{g}^{\,\flat}}

\newcommand{\aatop}{\ensuremath{\top}\xspace}
\newcommand{\abot}{\ensuremath{\bot}\xspace}
\newcommand{\aand}{\ensuremath{\wedge}\xspace}
\newcommand{\aor}{\ensuremath{\vee}\xspace}

\newcommand{\ABOT}{\ensuremath{\check{\bot}}\xspace}

\newcommand{\wbox}{\ensuremath{\Box}\xspace}
\newcommand{\wdia}{\ensuremath{\Diamond}\xspace}

\newcommand{\WBOX}{\ensuremath{\check{\Box}}\xspace}

\renewcommand{\epsilon}{\varepsilon}

\newcommand{\vp}{\overline{p}}

\newcommand{\ox}{\overline{x}}
\newcommand{\oy}{\overline{y}}
\newcommand{\oz}{\overline{z}}

\newcommand{\ophi}{\overline{\phi}}

\newcommand{\nomh}{\mathbf{h}}

\newcommand{\nomi}{\mathbf{i}}
\newcommand{\rnomi}{\textcolor{red}{\nomi}}
\newcommand{\bnomi}{\textcolor{blue}{\nomi}}

\newcommand{\nomj}{\mathbf{j}}
\newcommand{\rnomj}{\textcolor{red}{\nomj}}
\newcommand{\bnomj}{\textcolor{blue}{\nomj}}

\newcommand{\nomk}{\mathbf{k}}

\newcommand{\noml}{\mathbf{l}}

\newcommand{\cnomm}{\mathbf{m}}
\newcommand{\rcnomm}{\textcolor{red}{\cnomm}}
\newcommand{\bcnomm}{\textcolor{blue}{\cnomm}}

\newcommand{\cnomn}{\mathbf{n}}

\newcommand{\cnomo}{\mathbf{o}}

\newcommand{\cnoml}{\mathbf{l}}

\newcommand{\bba}{\mathbb{A}}
\newcommand{\bbA}{\mathbb{A}}
\newcommand{\bbL}{\mathbb{L}}

\newcommand{\AATOP}{\hat{\top}}

\newcommand{\marginnote}[1]{\marginpar{\raggedright\tiny{#1}}}

\tikzset{
	treenode/.style = {align=center, inner sep=0pt, text centered},
	Ske/.style = {treenode, ellipse, double, draw=black,
		minimum width=6pt, thick},
	PIA/.style = {treenode, ellipse, black, draw=black,
		minimum width=6pt},
	Crit/.style = {treenode, rectangle, draw=black,
		minimum width=0.5em, minimum height=0.5em}
}

\def\fCenter{{\mbox{$\ \vdash\ $}}}

\EnableBpAbbreviations

\newcommand{\fns}{\footnotesize}

\newcommand{\commment}[1]{}
\numberwithin{equation}{section}

\newcommand{\cfDLE}{\underline{D.LE}}
\newcommand{\cfDDLE}{\underline{D.DLE}}

\newcommand{\bx}{\textcolor{blue}{x}}

\newcommand{\bu}{\textcolor{blue}{u}}
\newcommand{\bz}{\textcolor{blue}{z}}

\newcommand{\bt}{\textcolor{blue}{t}}
\newcommand{\bone}{\textcolor{blue}{1}}
\newcommand{\bn}{\textcolor{blue}{n}}
\newcommand{\obp}{\overline{\textcolor{blue}{p}}}
\newcommand{\obx}{\overline{\textcolor{blue}{x}}}
\newcommand{\obz}{\overline{\textcolor{blue}{z}}}

\newcommand{\ry}{\textcolor{red}{y}}

\newcommand{\rw}{\textcolor{red}{w}}
\newcommand{\rt}{\textcolor{red}{t}}

\newcommand{\rone}{\textcolor{red}{1}}
\newcommand{\rn}{\textcolor{red}{n}}
\newcommand{\orq}{\overline{\textcolor{red}{q}}}
\newcommand{\ory}{\overline{\textcolor{red}{y}}}
\newcommand{\orw}{\overline{\textcolor{red}{w}}}
\newcommand{\ba}{\textcolor{blue}{\alpha}}
\newcommand{\oba}{\overline{\textcolor{blue}{\alpha}}}
\newcommand{\bpsi}{\textcolor{blue}{\psi}}

\newcommand{\bgamma}{\textcolor{blue}{\gamma}}

\newcommand{\bsigma}{\textcolor{blue}{\sigma}}

\newcommand{\bpi}{\textcolor{blue}{\pi}}
\newcommand{\obu}{\overline{\textcolor{blue}{u}}}
\newcommand{\orv}{\overline{\textcolor{red}{v}}}

\newcommand{\orb}{\overline{\textcolor{red}{\beta}}}
\newcommand{\rxi}{\textcolor{red}{\xi}}

\newcommand{\rdelta}{\textcolor{red}{\delta}}

\newcommand{\rtau}{\textcolor{red}{\tau}}

\newcommand{\rsigma}{\textcolor{red}{\sigma}}

\newcommand{\cceq}{\coloneqq}

\newcommand{\OSI}{\overline{\Sigma}}

\newcommand{\OXI}{\overline{\Xi}}
\newcommand{\ol}[1]{\overline{#1}}

\title{Inception Display Calculi}
\author[1]{Andrea De Domenico}
\author[2]{Giuseppe Greco}
\author[2,3]{Alessandra Palmigiano}

\affil[1]{\small IMDEA Software Institute, Madrid, Spain}
\affil[2]{\small Vrije Universiteit Amsterdam}
\affil[3]{\small Department of Mathematics and Applied Mathematics, U.~of Johannesburg, South Africa}

\date{ \ } 

\begin{document}
\maketitle

\begin{abstract}
Display calculi were introduced by Nuel Belnap in \cite{belnap1982display} as a natural extension of Gentzen's sequent calculi, as a uniform and modular framework capable of encompassing broad classes of logics. In \cite{ucaaptt}, the properly displayable (D)LE-logics are syntactically characterized as the logics axiomatised by {\em analytic inductive axioms}  for any signature. We extend the framework of proper display calculi for LE-logics to include axiomatic extensions with axioms that are {\em inductive} but not necessarily analytic inductive. This class of axioms covers and properly extends all Sahlqvist axioms. The present framework takes inspiration from Schroeder-Heister's calculus of Higher-Level Rules \cite{Psh14} and captures the whole acyclic fragment of the substructural hierarchy \cite{ciabattoni2008axioms} when generalized to arbitrary signatures. We apply unified correspondence theory and the algorithm ALBA to uniformly generate analytic rules for the aforementioned axiomatic extensions.

{\em Keywords:} Non-distributive logics, Proper display calculi, unified correspondence, analytic extensions, inductive inequalities.
\end{abstract}




\section{Introduction}
\label{sec: intro}
Display calculi are a celebrated proof-theoretic framework introduced by Belnap \cite{belnap1982display}. They extend Gentzen’s sequent calculi by enriching the language with {\em structural connectives}, that is, syntactic constructors generating well-formed \emph{structures} occurring in sequents. Formulas are treated as atomic structures. In Gentzen’s original calculi, the only structural connectives are the commas separating formulas in the antecedent and consequent of sequents; display calculi generalise this idea by allowing a much richer structural language.

A central design principle of display calculi is the systematic separation between {\em logical rules} and {\em structural rules}. Logical rules introduce logical connectives according to a uniform scheme determined by their arity and polarity (tonicity) in each coordinate. Moreover, each logical connective is systematically associated with suitable structural connectives, which act as its “proxies” at the structural level. In this way, logical behaviour is reflected in the architecture of the structural language.

The choice of structural connectives is constrained by the requirement that the calculus satisfy the \emph{display property}: for every derivable sequent $s = X \vdash Y$, and for every substructure $Z$ occurring in $s$, there exists a sequent $s'$ interderivable with $s$ such that either $s' = Z \vdash W$ or $s' = W \vdash Z$. Thus every substructure of a derivable sequent can be “displayed” in isolation on one side of the sequent. The interderivability of $s$ and $s'$ is ensured by structural rules known as \emph{display postulates}, which encode adjointness or residuation conditions for each structural connective in every coordinate.

Conceptually, the display property guarantees that the operational core of a rule can always be isolated, which in turn allows for a uniform formulation of logical rules and of the cut rule. This structural transparency underlies Belnap’s general cut-elimination theorem for display calculi.

Different strategies exist for choosing structural connectives. In \cite{belnap1982display}, Belnap kept their number minimal. By contrast, \cite{Gore1} introduced a structural proxy for each logical connective. In the present paper, we follow the latter approach: for every logical connective, we introduce a corresponding structural connective and close the structural language under all adjoints or residuals in every coordinate.

Structural rules manipulate only structural connectives and encode the specific features of the logic under consideration—properties that, in a Hilbert-style presentation, would typically be expressed by axioms. The class of admissible structural rules is precisely characterised so that, when added to a base display calculus enjoying cut elimination, this property is preserved.

{\em Proper} display calculi, identified by Wansing \cite[Section 4.1]{wansing1998displaying}, form a subclass in which every structural rule is closed under uniform substitution. This additional constraint yields a robust meta-theory and a high degree of modularity.

In \cite{Gore98Gaggles}, it is shown that every basic LE-logic can be captured by a cut-free basic display calculus. In \cite{ucaaptt}, two open questions in the field are answered positively:
\begin{itemize}
\item[(i)] Let $\mathbf{L}$ be a logic in the language $\mathcal{L}$ and let $\mathbf{D.L}$ be a display calculus with logical language $\mathcal{L}$ and structural language $\mathcal{L}^\ast$, where $\mathcal{L}^\ast \supset \mathcal{L}$, $\mathcal{L}^\ast \setminus \mathcal{L}$ contains only adjoints or residuals of connectives in $\mathcal{L}$, the operations interpreting the connectives in $\mathcal{L}^\ast$ are fully residuated while those interpreting the connectives in $\mathcal{L}$ are not, and such that $\mathbf{D.L}$ derives all the theorems of $\mathbf{L}$. Then the logic of $\mathbf{D.L}$ is a conservative extension of $\mathbf{L}$.
\item[(ii)] Let $\mathbf{L}$ be a logic in the language $\mathcal{L}$ presented via a display calculus $\mathbf{D.L}$, and let $\mathbf{L'} = \mathbf{L} \cup \Sigma$, where $\Sigma$ is a set of \emph{analytic inductive} axioms in $\mathcal{L}$. Then a display calculus $\mathbf{D.L'}$ capturing $\mathbf{L'}$ can be automatically generated.
\end{itemize}

In \cite{LEframes}, these results are generalised to arbitrary LE-logics in arbitrary signatures, including semantic cut elimination. A comparison between the basic notions and terminology of gaggle theory and order theory is provided in \cite[Appendix B]{LEframes}.

The algorithmic generation of analytic rules has become a central theme in structural proof theory. The methodology has been refined and extended beyond display calculi by several authors; see \cite{ChnGrePalTzi21} for a comprehensive survey.

Proper \emph{multi-type} display calculi extend Belnap’s framework by introducing multiple types, allowing distinct structural environments to be handled separately. They retain the defining features of proper display calculi—closure under uniform substitution (relativised to each type) and modular treatment of analytic inductive axioms—while significantly increasing expressive power.

Although general characterisation results limit the scope of single-type proper display calculi, the multi-type setting considerably broadens applicability. In particular, it allows one to treat logics that are not properly displayable in a single-type language, thereby expanding the range of systems that can be captured within the display-calculus framework.

This includes well-known logical systems such as inquisitive logic \cite{inquisitive}, dynamic epistemic logic, propositional dynamic logic, semi-De Morgan logic and its extensions \cite{SDM}, bilattice logic \cite{greco2017bilattice}, non-normal and conditional logics \cite{chen2022non}, and logics of rough algebras \cite{GJMPT19,ProperMulti-TypeDisplayCalculiforRoughAlgebras}. The methodology also provides a principled basis for designing new families of logical systems, such as those introduced in \cite{bilkova2018logic}.

Finally, a formal connection has been established between correspondence phenomena \cite{Sah75} and display calculi \cite{belnap1982display}, via unified correspondence theory \cite{CoGhPa14}. One important outcome is the development of proper display calculi for LE-logics \cite{conradie2019algorithmic}, together with a systematic method for transforming analytic inductive inequalities into structural rules that can be modularly added to a base calculus without compromising cut elimination \cite{ucaaptt,ChnGrePalTzi21}.

As mentioned above, in previous years, a formal connection between correspondence phenomena \cite{Sah75} and the theory of display calculi \cite{belnap1982display} has been established, applying results and insights from unified correspondence theory \cite{CoGhPa14}. One of the consequences of this connection is the development of proper display calculi for the class of LE-logics \cite{conradie2019algorithmic}, together with a method for converting a broad class of axioms (the class of  analytic inductive inequalities) into rules that can be modularly added to the base calculus without disrupting the eliminability of the cut rule \cite{ucaaptt,ChnGrePalTzi21}.

In this paper, we generalize the framework of (single-type) proper display calculi for LE-logics to include axiomatic extensions with axioms that are inductive \cite{conradie2019algorithmic} but not necessarily analytic inductive, thus extending the class of axioms that can be converted into analytic rules. This class covers and properly extends all Sahlqvist axioms. A semantic analysis of the first-order correspondents of inductive inequalities suggests an approach that is similar in nature to that of Schroeder-Heister's Calculus of Higher-level Rules \cite{Psh14}, and captures the acyclic portion of the substructural hierarchy \cite{CiRa14}. 

Our approach aims at achieving the same goal as  Negri's method for generating labelled calculi with systems of rules \cite{Negri14}, in the uniform setting of LE-logics. 
We apply the algorithm ALBA \cite{conradie2019algorithmic} to uniformly generate analytic rules for the previously mentioned inductive axiomatic extensions. The resulting \emph{inception display calculi} framework is  named after Christopher Nolan's 2010 movie,  set in an interpersonal dream space in which multiple dreams can be nested deep into other dreams. Analogously, the framework of inception display calculi allows for derivations to occur nested within other derivations,  in the form of higher-order assumptions. We  show cut elimination for these calculi  via a Belnap-style metatheorem, under the assumption that, in each derivation,   the `nesting' relation (referred to as  {\em inception depth}) is well-founded.

{\em Structure of the paper. }In Section \ref{sec: prelim}, we gather preliminary notions on $\mathrm{LE}$-logics, their syntax, algebraic semantics, inductive LE-axioms, the workings of ALBA,  and display calculi for LE-logics. In Section \ref{sec:inception}, we introduce the framework of inception display calculi and show some examples. In Section \ref{sec:cut_elimination}, we prove a Belnap-style cut-elimination metatheorem for the family of proper inception display calculi. Finally, in Section \ref{sec:concl}, we summarize the results of this article and collect further research directions.

\section{Preliminaries}
\label{sec: prelim}

This section collects basic facts and results from \cite[Section 2 and Appendix A]{LEframes} and \cite[Section 2]{ChnGrePalTzi21}.

\subsection{LE-logics and their algebraic semantics}
\label{ssec: LE-logics}

 \paragraph{Language and axiomatization of basic normal LE-logics.}
An {\em order-type} over $n\in \mathbb{N}$ is an $n$-tuple $\epsilon\in \{1, \partial\}^n$. For every order type $\epsilon$, we denote its {\em opposite} order type by $\epsilon^\partial$, that is, $\epsilon^\partial_i = 1$ iff $\epsilon_i=\partial$ for every $1 \leq i \leq n$. For any lattice $\bba$, we let $\bba^1: = \bba$ and $\bba^\partial$ be the dual lattice, that is, the lattice associated with the converse partial order of $\bba$. For any order type $\varepsilon$ over $n$, we let $\bba^\varepsilon: = \Pi_{i = 1}^n \bba^{\varepsilon_i}$.
	
The language $\mathcal{L}_\mathrm{LE}(\mathcal{F}, \mathcal{G})$ (from now on abbreviated as $\mathcal{L}_\mathrm{LE}$) takes as parameters a denumerable set of proposition letters $\mathsf{AtProp}$, elements of which are denoted $p,q,r$, possibly with indexes, and disjoint sets of connectives $\mathcal{F}$ and $\mathcal{G}$.
Each $f\in \mathcal{F}$ (resp.~$g\in \mathcal{G}$) has arity $n_f\in \mathbb{N}$ (resp.\ $n_g\in \mathbb{N}$) and is associated with some order-type $\varepsilon_f$ over $n_f$ (resp.\ $\varepsilon_g$ over $n_g$). The terms (formulas) of $\mathcal{L}_\mathrm{LE}$ are defined recursively as follows: 

\begin{center}
$\varphi \cceq p \mid \bot \mid \top \mid \varphi \wedge \varphi \mid \varphi \vee \varphi \mid f(\varphi_1, \ldots, \varphi_{n_f}) \mid g(\varphi_1, \ldots, \varphi_{n_g})$
\end{center}

\noindent where $p \in \mathsf{AtProp}$, $f \in \mathcal{F}$, $g \in \mathcal{G}$. Terms in $\mathcal{L}_\mathrm{LE}$ will be denoted either by $s,t$, or by lowercase Greek letters such as $\varphi, \psi, \gamma$ etc. 
In the remainder of the paper, when it is clear from the context,
we will often simplify notation and write e.g.\  $n$ for $n_f$ and $\varepsilon_i$ for $\varepsilon_{f,i}$. We also extend the $\{1,\partial\}$-notation to the symbols $\vee,\wedge,\bot,\top,\le,\vdash$ by defining 

\begin{center}
$\vee^\partial=\wedge,\qquad \wedge^\partial=\vee,\qquad \bot^\partial=\top,\qquad \top^\partial=\bot,\qquad {\le^\partial}={\ge},\qquad \alpha\ {\vdash^\partial}\ \beta=\beta\ {\vdash}\ \alpha
$
\end{center}

\noindent while superscript $^1$ denotes the identity map. Therefore, in what follows, we will sometimes write e.g.~$\vee^{\epsilon_i}$ to denote $\vee$ when $\epsilon_i = 1$ and  $\wedge$ when $\epsilon_i = \partial$. In what follows, for every $k \in \mathcal{F} \cup \mathcal{G}$ we use $k(\ophi)[\varphi]_i$ to indicate that the formula $\varphi$ occurs in the $i$-th coordinate of the vector $\ophi$.

For any language $\mathcal{L}_\mathrm{LE} = \mathcal{L}_\mathrm{LE}(\mathcal{F}, \mathcal{G})$, the {\em basic}, or {\em minimal} $\mathcal{L}_\mathrm{LE}$-{\em logic} $\mathbf{L}_\mathrm{LE}$ is a set of sequents $\varphi\vdash\psi$, with $\varphi,\psi\in\mathcal{L}_\mathrm{LE}$, which contains as axioms the following sequents for lattice operations and additional connectives:

\begin{center}
$\bot\vdash p, \ \ \ p\vdash p, \ \ \ p\vdash \top, \ \ \ p\vdash p \vee q, \ \ \ q\vdash p\vee q, \ \ \ p\wedge q\vdash p, \ \ \ p\wedge q \vdash q,$
\end{center}

\begin{center}
$f(\vp)[\bot^{\epsilon_i}]_i \vdash \bot,  \qquad f(\vp)[q\vee^{\epsilon_i} r]_i \vdash f(\vp)[q]_i \vee f(\vp)[r]_i,$
\end{center}

\begin{center}
$\top \vdash g(\vp)[\top^{\epsilon_i}]_i, \qquad g(\vp)[q]_i \wedge g(\vp)[r]_i \vdash g(\vp)[q\wedge^{\epsilon_i} r]_i,$
\end{center}

\noindent and is closed under the following inference rules (note that $\varphi\vdash^\partial\psi$ means $\psi\vdash\varphi$):

\begin{center}
\begin{tabular}{@{}c@{}}
\AXC{$\varphi \vdash \chi$}
\AXC{$\chi \vdash \psi$}
\BIC{$\varphi \vdash \psi$}
\DP
 \ 
\AXC{$\varphi \vdash \psi$}
\UIC{$\varphi(\chi/p) \vdash \psi(\chi/p)$}
\DP
 \ 
\AXC{$\chi \vdash \varphi$}
\AXC{$\chi \vdash \psi$}
\BIC{$\chi \vdash \varphi \wedge \psi$}
\DP
 \ 
\AXC{$\varphi \vdash \chi$}
\AXC{$\psi \vdash \chi$}
\BIC{$\varphi \vee \psi \vdash \chi$}
\DP
 \\
 
\rule[0mm]{0mm}{8mm}\AXC{$\varphi \vdash^{\epsilon_{f,i}} \psi$}
\UIC{$f(\vp)[\varphi]_i \vdash f(\vp)[\psi]_i$}
\DP

\qquad\qquad

\AXC{$\varphi \vdash^{\epsilon_{g,i}} \psi$}
\UIC{$g(\vp)[\varphi]_i \vdash g(\vp)[\psi]_i$}
\DP
 \\
\end{tabular}
\end{center}


In a basic language $\mathcal{L}_{\mathrm{LE}}(\mathcal{F},\mathcal{G})$, the elements of $\mathcal{F}\cup\mathcal{G}$ are usually mutually independent. However, in some cases,  some   connectives in $\mathcal{F}\cup\mathcal{G}$ are one another's residuals (or (dual) Galois-residuals) in some coordinates.\footnote{In what follows, we will typically not make a difference between co-variant and contravariant residuation, and refer to any of these as `residuals'.} In particular, given $f\in\mathcal{F}$ or $g\in\mathcal{G}$ we  have $f^\sharp_i\in\mathcal{G}$ if $\varepsilon_{f,i} = 1$ or $g^\flat_i\in\mathcal{F}$ if $\varepsilon_{g,i} = 1$, and $f^\sharp_i\in\mathcal{F}$ if $\varepsilon_{f,i} = \partial$ or $g^\flat_i\in\mathcal{G}$ if $\varepsilon_{g,i} = \partial$, the order-type of which are (i) $\epsilon_{f_i^\sharp,i} = \epsilon_{f,i}$ and $\epsilon_{f_i^\sharp,j} = (\epsilon_{f,j})^{\epsilon_{f,i}^\partial}$ for any $j\neq i$, and (ii) $\epsilon_{g_i^\flat,i} = \epsilon_{g,i}$ and $\epsilon_{g_i^\flat,j} = (\epsilon_{g,j})^{\epsilon_{g,i}^\partial}$ for any $j\neq i$. For instance, if $f$ and $g$ are binary connectives such that $\varepsilon_f = (1, \partial)$ and $\varepsilon_g = (\partial, 1)$, then $\varepsilon_{f^\sharp_1} = (1, 1)$, $\varepsilon_{f^\sharp_2} = (1, \partial)$, $\varepsilon_{g^\flat_1} = (\partial, 1)$ and $\varepsilon_{g^\flat_2} = (1, 1)$.\footnote{Note that this notation depends on the connective which is taken as primitive, and needs to be carefully adapted to well known cases. For instance, consider the  `fusion' connective $\circ$ (which, when denoted  as $f$, is such that $\varepsilon_f = (1, 1)$). Its residuals
$f_1^\sharp$ and $f_2^\sharp$ are commonly denoted $/$ and
$\backslash$ respectively. However, if $\backslash$ is taken as the primitive connective $g$, then $g_2^\flat$ is $\circ = f$, and
$g_1^\flat(x_1, x_2): = x_2/x_1 = f_1^\sharp (x_2, x_1)$. This example shows
that, when identifying $g_1^\flat$ and $f_1^\sharp$, the conventional order of the coordinates is not preserved, and depends on which connective
is taken as primitive.} The intended interpretation of the $n_f$-ary connective $f^\sharp_i$ is  
the right residual of $f\in\mathcal{F}$ in its $i$th coordinate if $\varepsilon_{f,i} = 1$ (resp.\ its dual Galois-residual if $\varepsilon_{f,i} = \partial$). The intended interpretation of the  $n_g$-ary connective $g^\flat_i$ is the left residual of $g\in\mathcal{G}$ in its $i$th coordinate if $\varepsilon_{g,i} = 1$ (resp.\ its Galois-residual if $\varepsilon_{g,i} = \partial$).

%
If so, $\mathbf{L}_{\mathrm{LE}}$ is augmented with the invertible rules 

\begin{center}
{\centerline{
\begin{tabular}{cc}
\AXC{$f(\ophi)[\varphi]_i \fCenter \psi$}
\doubleLine
\UIC{$\varphi \fCenter^{\epsilon_{f,i}} f^\sharp_i(\ophi)[\psi]_i$}
\DP
\qquad & \qquad
\AXC{$\varphi \fCenter g(\ophi)[\psi]_i$}
\doubleLine
\UIC{$g^\flat_i(\ophi)[\varphi]_i \fCenter^{\epsilon_{g,i}} \psi$}
\DP \\
\end{tabular}
}}
\end{center}


Any given language $\mathcal{L}_\mathrm{LE} = \mathcal{L}_\mathrm{LE}(\mathcal{F}, \mathcal{G})$ can be associated with the language $\mathcal{L}_\mathrm{LE}^* = \mathcal{L}_\mathrm{LE}(\mathcal{F}^*, \mathcal{G}^*)$, where $\mathcal{F}^*\supseteq \mathcal{F}$ and $\mathcal{G}^*\supseteq \mathcal{G}$ are obtained by expanding $\mathcal{L}_\mathrm{LE}$ with the residuals of each connective in each coordinate. Then, the logic $\mathbf{L}_{\mathrm{LE}}$ can be expanded to the minimal fully residuated $\mathcal{L}_\mathrm{LE}^\ast$-logic $\mathbf{L}_\mathrm{LE}^*$, by adding the corresponding residuation rules.  
The logic $\mathbf{L}_\mathrm{LE}^*$ is a conservative extension of $\mathbf{L}_\mathrm{LE}$ (cf.~\cite[Theorem 2.4]{ChnGrePalTzi21}), i.e.~every $\mathcal{L}_\mathrm{LE}$-sequent $\varphi\vdash\psi$ is derivable in $\mathbf{L}_\mathrm{LE}$ iff $\varphi\vdash\psi$ is derivable in $\mathbf{L}_\mathrm{LE}^*$. 

\paragraph{\textbf{LE-algebras.}} For any tuple $(\mathcal{F}, \mathcal{G})$ of disjoint sets of function symbols as above, a {\em  lattice expansion} (abbreviated as LE) is a tuple $\bba = (\bbL, \mathcal{F}^\bbA, \mathcal{G}^\bbA)$ such that $\bbL$ is a lattice, $\mathcal{F}^\bbA = \{f^\bbA\mid f\in \mathcal{F}\}$ and $\mathcal{G}^\bbA = \{g^\bbA\mid g\in \mathcal{G}\}$, such that every $f^\bbA\in\mathcal{F}^\bbA$ (resp.\ $g^\bbA\in\mathcal{G}^\bbA$) is an $n_f$-ary (resp.\ $n_g$-ary) operation on $\bbA$. We will often simplify notation and write e.g.\ $f$ for $f^\bbA$. Such an operation $f$ (resp.~$g$)  is an {\em operator} if for every $1\leq i\leq n$,
  \centerline{
$f(\vp)[q\vee^{\epsilon_i} r]_i = f(\vp)[q]_i \vee f(\vp)[r]_i \quad \text{ and }\quad
g(\vp)[q\wedge^{\epsilon_i} r]_i = g(\vp)[q]_i \wedge g(\vp)[r]_i,$
}
and it is {\em normal} if $f(\vp)[\bot^{\epsilon_i}]_i = \bot \text{ and } g(\vp)[\top^{\epsilon_i}]_i = \top.$

A normal LE as above is {\em complete} if, in addition, $\mathbb{L}$ is a complete lattice and the operation corresponding to each $f\in \mathcal{F}$ (resp.~$g\in \mathcal{G}$) is coordinate-wise completely join-preserving (resp.~meet-preserving) when regarded as a map $f^\bba: \bba^{\varepsilon_f}\to \bba$ (resp.\ $g^\bba: \bba^{\varepsilon_g}\to \bba$). By well known order-theoretic facts (cf.~\cite[Proposition 7.34]{DaPr}), a complete normal LE is also {\em completely residuated}, i.e.~the  residuals $f^\sharp_i$ (resp.~$g^\flat_i$) 
in each coordinate $i$ exist of the operations corresponding to  every $f\in \mathcal{F}$ (resp.~$g\in \mathcal{G}$).
Moreover, any basic normal LE-logic and its fully residuated expansion  are both complete w.r.t.~the same class of complete normal LEs, and the latter is a conservative expansion of the former (cf.~\cite[Theorem 12]{ucaaptt},\cite[Section 1.3]{conradie2019algorithmic}).
%
Henceforth, every LE is assumed to be normal, so the adjective `normal' is typically dropped.

Each language $\mathcal{L}_\mathrm{LE}$ is interpreted in the appropriate class of LEs by considering the unique homomorphic extensions of assignments of proposition variables. For every LE $\bba$, the symbol $\vdash$ in  sequents $\varphi\vdash \psi$ is interpreted as the lattice order $\leq$. That is, sequent $\varphi\vdash\psi$ is valid in $\bba$ if $h(\varphi)\leq h(\psi)$ for every homomorphism $h$ from the $\mathcal{L}_\mathrm{LE}$-algebra of formulas over $\mathsf{AtProp}$ to $\bba$. The notation $\mathbb{LE}\models\varphi\vdash\psi$ indicates that $\varphi\vdash\psi$ is valid in every LE. Then it is easy to verify by inspecting the rules that the minimal LE-logic $\mathbf{L}_\mathrm{LE}$ 
 is sound w.r.t.~its corresponding class of algebras $\mathbb{LE}$. Moreover,  a routine Lindenbaum-Tarski construction shows that $\mathbf{L}_\mathrm{LE}$  is also complete w.r.t.~$\mathbb{LE}$-algebras, i.e.\ that any sequent $\varphi\vdash\psi$ is provable in $\mathbf{L}_\mathrm{LE}$ iff $\mathbb{LE}\models\varphi\vdash\psi$. 

\subsection{Inductive LE-inequalities}
\label{ssec: inductive}

In this section we  recall the definitions of inductive LE-inequalities introduced in \cite{conradie2019algorithmic} and their corresponding `analytic' restrictions introduced in \cite{ucaaptt} in the distributive setting and then generalized to the setting of  LEs of arbitrary signatures in \cite{LEframes}.  Inequalities in  these classes 
are all  canonical  and elementary \cite[Theorems 7.1 and 6.1]{conradie2019algorithmic}.  



\begin{definition}
	\label{def: signed gen tree}
	The \emph{positive} (resp.~\emph{negative}) {\em generation tree} of any $\mathcal{L}_\mathrm{LE}$-term $s$ is defined by labelling the root node of the generation tree (i.e.~syntax tree) of $s$ with the sign $+$ (resp.~$-$), and then propagating the labelling on each remaining node as follows:
	\begin{itemize}
		\item For any node labelled with $ \lor$ or $\land$, assign the same sign to its children nodes.
		\item For any node labelled with $h\in \mathcal{F}\cup \mathcal{G}$ of arity $n_h\geq 1$, and for any $1\leq i\leq n_h$, assign the same (resp.~opposite) sign to its $i$th child node if $\varepsilon_{h}(i) = 1$ (resp.~$\varepsilon_{h} (i)= \partial$).
	\end{itemize}
	Nodes in signed generation trees are \emph{positive} (resp.~\emph{negative}) if signed $+$ (resp.~$-$).
\end{definition}

Signed generation trees will  mostly be used in the context of term inequalities $s\leq t$. In this context, we will typically consider the positive generation tree $+s$ for the left-hand side and the negative one $-t$ for the right-hand side. \footnote{\label{footnote: precedent succedent} In the context of sequents $s\vdash t$, signed generation trees $+s$ and $-t$ can also be used to specify when subformulas of $s$ (resp.~$t$) occur in precedent or succedent position. Specifically, a given occurrence of  formula $\gamma$ is in {\em precedent} (resp.~{\em succedent}) position in $s\vdash t$ iff $+\gamma$ is a subtree of $+s$ or of $-t$ (notation: $+\gamma\prec +s$ or $+\gamma \prec -t$) (resp.~$-\gamma$ is a subtree of $+s$ or of $-t$, notation: $-\gamma\prec +s$ or $-\gamma \prec -t$).}

For any term $s(p_1,\ldots p_n)$, any order-type $\epsilon$ over $n$, and any $1 \leq i \leq n$, an \emph{$\epsilon$-critical node} in a signed generation tree of $s$ is a leaf node $+p_i$ if $\epsilon_i = 1$, and a leaf node $-p_i$ if $\epsilon_i = \partial$. An $\epsilon$-{\em critical branch} in the tree is a branch the leaf of which is an $\epsilon$-critical node. 
For every term $s(p_1,\ldots p_n)$ and every order-type $\epsilon$, we say that $+s$ (resp.~$-s$) {\em agrees with} $\epsilon$, and write $\epsilon(+s)$ (resp.~$\epsilon(-s)$), if every leaf in the signed generation tree of $+s$ (resp.~$-s$) is $\epsilon$-critical.
 We also write $+s'\prec \ast s$ (resp.~$-s'\prec \ast s$) to indicate that the subterm $s'$ inherits the positive (resp.~negative) sign from the signed generation tree $\ast s$. Finally, we  write $\epsilon(\gamma) \prec \ast s$ (resp.~$\epsilon^\partial(\gamma) \prec \ast s$) to indicate that the signed subtree $\gamma$, with the sign inherited from $\ast s$, agrees with $\epsilon$ (resp.~with $\epsilon^\partial$).

\begin{definition}
	\label{def:good:branch}
	Non-leaf nodes in signed generation trees are called \emph{$\Delta$-adjoints}, \emph{syntactically left residuals (SLR)}, \emph{syntactically right residuals (SRR)}, and \emph{syntactically right adjoints (SRA)}, according to the specification given in Table \ref{Join:and:Meet:Friendly:Table}.
	Nodes that are either classified as  $\Delta$-adjoints or SLR are collectively referred to as {\em Skeleton-nodes}, while SRA- and SRR-nodes are referred to as {\em PIA-nodes}.
	A branch in a signed generation tree $\ast s$, with $\ast \in \{+, - \}$, is called a \emph{good branch} if it is the concatenation of two paths $P_1$ and $P_2$, one of which may possibly be of length $0$, such that $P_1$ is a path from the leaf consisting (apart from variable nodes) only of PIA-nodes, and $P_2$ consists (apart from variable nodes) only of Skeleton-nodes. 
	A good branch is \emph{Skeleton} if the length of $P_1$ is $0$, 
	and  is {\em SLR}, or {\em definite}, if  $P_2$ only contains SLR nodes.

	\begin{table}[h] { \footnotesize
		\begin{center}
			\bgroup
			\def\arraystretch{1.2}
			\begin{tabular}{| c | c |}
				\hline
				Skeleton &PIA\\
				\hline
				$\Delta$-adjoints & Syntactically Right Adjoint (SRA) \\
				\begin{tabular}{ c c c c c c}
					$+$ &$\vee$ &\\
					$-$ &$\wedge$ \\
				\end{tabular}
				&
				\begin{tabular}{c c c c }
					$+$ &$\wedge$ &$g$ & with $n_g = 1$ \\
					$-$ &$\vee$ &$f$ & with $n_f = 1$ \\

				\end{tabular}
				\\ \hline
				Syntactically Left Residual (SLR) &Syntactically Right Residual (SRR)\\
				\begin{tabular}{c c c c }
					$+$ &  &$f$ & with $n_f \geq 1$\\
					$-$ &  &$g$ & with $n_g \geq 1$ \\
				\end{tabular}
				&\begin{tabular}{c c c c}
					$+$ & &$g$ & with $n_g \geq 2$\\
					$-$ &  &$f$ & with $n_f \geq 2$\\
				\end{tabular}
				\\
				\hline
			\end{tabular}
			\egroup
		\end{center} }
		\caption{Skeleton and PIA nodes for $\mathrm{LE}$-languages.}\label{Join:and:Meet:Friendly:Table}
		\vspace{-1em}
	\end{table}
\end{definition}

\begin{definition}\label{Inducive:Ineq:Def}
	For any order-type $\epsilon$ and any irreflexive and transitive relation (i.e.~strict partial order) $\Omega$ on $p_1,\ldots p_n$, the signed generation tree $*s$ $(* \in \{-, + \})$ of a term $s(p_1,\ldots p_n)$ is \emph{$(\Omega, \epsilon)$-inductive} if, for all $1 \leq i \leq n$,
	\begin{enumerate}
		\item every $\epsilon$-critical branch with leaf $p_i$ is good (cf.~Definition \ref{def:good:branch});
		\item every $m$-ary SRR-node occurring in the critical branch is of the form 
        \[\circledast(\gamma_1,\dots,\gamma_{j-1},\beta,\gamma_{j+1}\ldots,\gamma_m),\]
        \noindent where for any $\ell\in\{1,\ldots,m\}\setminus \{j\}$, 
		\begin{enumerate}
			\item $\epsilon^\partial(\gamma_\ell) \prec \ast s$ (cf.~discussion before Definition \ref{def:good:branch}), and
			%
			\item $p_k <_{\Omega} p_i$ for every $p_k$ occurring in $\gamma_\ell$ and for every $1\leq k\leq n$.
		\end{enumerate}
	\end{enumerate}
	
	We refer to $<_{\Omega}$ as the \emph{dependency order} on the variables. An inequality $s \leq t$ is \emph{$(\Omega, \epsilon)$-inductive} if the signed generation trees $+s$ and $-t$ are $(\Omega, \epsilon)$-inductive. An inequality $s \leq t$ is \emph{inductive} if it is $(\Omega, \epsilon)$-inductive for some $\Omega$ and $\epsilon$.  An inductive inequality is {\em analytic} if every branch is good, and is {\em definite} if no $\Delta$-adjoint nodes (i.e.~$-\wedge$ and $+\vee$) occur in its Skeleton.
\end{definition}

\begin{lemma}
\label{lemma: from inductive to definite sequents} 
\cite[Lemma 8.2]{conradie2019algorithmic} Any inductive inequality is equivalent to a conjunction of definite inductive inequalities.
\end{lemma}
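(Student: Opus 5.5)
The idea is to distribute the $\Delta$-adjoint nodes in the Skeleton of $+s$ and $-t$ upward, toward the roots, and then split the inequality. Recall that $\Delta$-adjoints are $+\vee$ and $-\wedge$. Fix an $(\Omega,\epsilon)$-inductive inequality $s\leq t$. The proof is by induction on the number of $\Delta$-adjoint nodes occurring in the Skeleton parts of $+s$ and $-t$, i.e.\ below which (toward the root) there are only Skeleton nodes.

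First I show that such a node can always be pushed up to the root. Take a lowest Skeleton node $+\vee$ (or dually $-\wedge$) whose parent is an SLR node, say $+f(\ldots,\varphi\vee\psi,\ldots)$ with $\varepsilon_{f,i}=1$. Since $f$ is an operator, its $i$th coordinate preserves finite joins. This is validated in every LE, and in the logic by the axiom $f(\vp)[q\vee^{\epsilon_i} r]_i \vdash f(\vp)[q]_i \vee f(\vp)[r]_i$ and its converse, which follows from monotonicity. Hence $f(\ldots,\varphi\vee\psi,\ldots)$ can be replaced by $f(\ldots,\varphi,\ldots)\vee f(\ldots,\psi,\ldots)$. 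When $\varepsilon_{f,i}=\partial$, the child is signed $-$, so the node is $-\wedge$ and the same axiom with $\wedge^{\partial}$ applies. The cases of $-g$, and of a parent that is itself $+\vee$ or $-\wedge$, are dual or trivial. Repeating this step, every Skeleton $\Delta$-adjoint floats to the root. We obtain an equivalent inequality $\bigvee_i s_i \leq \bigwedge_j t_j$ in which no $s_i$ or $t_j$ contains a Skeleton $\Delta$-adjoint. This inequality is equivalent to the conjunction of all the inequalities $s_i\leq t_j$, by the lattice rules for $\vee$ on the left and $\wedge$ on the right.

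It remains to check that each $s_i\leq t_j$ is still $(\Omega,\epsilon)$-inductive. This is the main point to verify. The rewriting duplicates the sibling subterms $\gamma_\ell$ of the SLR node, and it keeps only one of $\varphi,\psi$ in each copy. Every branch of $+s_i$ is therefore a branch of $+s$ with some Skeleton $\Delta$-nodes deleted. The signs are unchanged, because each subterm keeps its polarity and the order-type of $f$ is the same in every copy. Deleting Skeleton nodes from the $P_2$ part of a good branch leaves it good. The conditions on SRR nodes involve only the PIA parts, and those subterms are copied verbatim, so conditions 2(a) and 2(b) of Definition \ref{Inducive:Ineq:Def} still hold with the same $\Omega$ and $\epsilon$.

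One subtlety needs care: only Skeleton $\Delta$-adjoints may be distributed. A $+\vee$ lying inside a PIA subtree, below an SRA or SRR node, is not handled this way and must be left in place. Choosing the lowest Skeleton $\Delta$-node whose ancestors are all Skeleton guarantees that every parent we distribute over is an SLR or $\Delta$-node, so the operator axioms apply. Finally, since each step removes one Skeleton $\Delta$-node from every resulting copy, the induction measure decreases, and the process terminates in definite inductive inequalities.
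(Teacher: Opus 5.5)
Your proposal is correct and takes essentially the same route as the argument behind the cited \cite[Lemma 8.2]{conradie2019algorithmic}; the paper gives no proof of its own. That argument is ALBA's preprocessing stage: distribute the SLR ancestors over Skeleton $+\vee$ and $-\wedge$ nodes so that these rise to the root, then apply the splitting rules, and observe that PIA subtrees are copied verbatim, so goodness of critical branches and the SRR conditions still hold with the same $\Omega$ and $\epsilon$. One point to tighten is the termination measure: a single distribution step can increase the total number of Skeleton $\Delta$-adjoints by duplicating sibling subterms, so it is cleaner to pick a topmost such node $\varphi\vee\psi$ (all of whose ancestors are SLR) and write $s$ directly as $s[\varphi/\varphi\vee\psi]\vee s[\psi/\varphi\vee\psi]$, where each disjunct has strictly fewer Skeleton $\Delta$-adjoints.
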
	

In what follows,  formulas $\varphi$ such that only Skeleton nodes occur in $+\varphi$ (resp.~$-\varphi$) will be referred to as {\em positive} (resp.~{\em negative}) {\em Skeleton formulas}, and inequalities $\varphi\leq \psi$ such that all nodes of $+\varphi$ and $-\psi$ are Skeleton are referred to as {\em Skeleton inequalities}. Without loss of generality, we will restrict ourselves to definite inductive formulas (cf.~Lemma \ref{lemma: from inductive to definite sequents}).

\subsection{The algorithm ALBA, informally}\label{subseq:informal}
	The  algorithm ALBA is one of the main tools of unified correspondence theory \cite{conradie2012algorithmic, conradie2019algorithmic, CoGhPa14}. In the present subsection, we illustrate it by means of an example. This presentation is based on analogous illustrations in \cite{ucaaptt} \cite{CFPS} and \cite{CGPSZ14}. 
	
	\bigskip
	
	Consider the LE-axiom $\Diamond \Box p \vdash \Box \Diamond\Box p$, which is $\epsilon$-Sahlqvist for $\epsilon(p) = 1$, but it is not analytic, since the branch on the right-hand side of the inequality is not good. Let us recall  that, by Birkhoff's representation theorem, every complete lattice is isomorphic to the concept lattice $\mathbb{P}^+$ of some polarity $\mathbb{P} = (A, X, I)$. Let $\bbA$ be a complete modal lattice such that the operations $\Diamond$ and $\Box$ on $\bbA$ are completely join-preserving and completely meet-preserving, respectively. ALBA starts with the following validity clause of the axiom above on $\bbA$:

	\begin{equation}\label{Church:Rosser}
	\bbA \models \forall p (\Diamond\Box p \leq \Box\Diamond \Box p),
	\end{equation}
	where the logical entailment relation   is interpreted as the ordering relation  $\leq$ on $\bbA$.  Given that we can assume w.l.o.g.~that $\bbA$ is based on a concept lattice, for every  $u\in \bbA$, $u = \bigvee \{\nomj\mid \nomj\leq u\}$ and $u = \bigwedge \{\cnomm\mid  u\leq \cnomm\}$, where concepts $\nomj$ (resp.~$\cnomm$) are the Galois-stable elements of $\mathbb{P}^+$ generated by elements $a\in A$ (resp.~$x\in X$) (cf.~\cite[Proposition 3.1]{conradie2020non}). Following the literature \cite{conradie2012algorithmic}, we  refer to the former variables as {\em nominals}, and  to the latter ones as {\em co-nominals}.  Hence, the condition above can  equivalently be rewritten as follows:
	\begin{equation*}
	\bbA\models \forall p \left(\bigvee\{\nomj\mid \nomj \leq\Diamond \Box p\} \leq \bigwedge\{\cnomm\mid\Box\Diamond \Box p\leq \cnomm\}\right),
	\end{equation*}

    \noindent and since the operations $\Diamond$ and $\Box$ on $\bbA$ are completely join-preserving and completely meet-preserving, the condition above can  equivalently be rewritten as follows:
	\begin{equation*}
	\bbA\models \forall p \left(\bigvee\{\Diamond\nomj\mid \nomj \leq \Box p\} \leq \bigwedge\{\Box\cnomm\mid\Diamond \Box p\leq \cnomm\}\right).
	\end{equation*}
	The condition above holds if and only if every element in the join on the left-hand side is less than or equal to every element in the meet on the right-hand side; thus, condition \eqref{Church:Rosser} above  can be rewritten as:
	\begin{equation} 
	\bbA\models \forall p \forall \nomj \forall \cnomm [(\nomj \leq \Box p \,\,\,\& \,\,\, \Diamond \Box p\leq \cnomm) \Rightarrow \Diamond \nomj \leq \Box \cnomm ].
	\end{equation}
	Since $\Box$ is completely meet-preserving, by well known order-theoretic facts, the left adjoint of $\Box$ exists, which we denote  $\Diamondblack$. Hence, the condition above can equivalently be rewritten as
	\begin{equation}\label{Eq:First:Approx}
	\bbA\models \forall p \forall \nomj \forall \cnomm [(\Diamondblack\nomj \leq  p \,\,\,\& \,\,\, \Diamond \Box p\leq \cnomm) \Rightarrow \Diamond \nomj \leq \Box \cnomm ].
	\end{equation}
	At this point we are in a position to eliminate the variable $p$ and equivalently rewrite the previous condition as follows:
	\begin{equation}
	\label{After:Ack:Eq}
	\bbA\models \forall \nomj \forall \cnomm [ \Diamond \Box \Diamondblack \nomj \leq \cnomm \Rightarrow \Diamond \nomj \leq \Box \cnomm ].
	\end{equation}
	Let us justify this equivalence: for the direction from top to bottom, fix an interpretation $V$ of the variables $\nomj$ and $\cnomm$ such that  $ \Diamond \Box \Diamondblack \nomj \leq \cnomm$. To prove that $\Diamond \nomi\leq \Box \cnomm$ holds under  $V$,  consider the variant $V^\ast$ of $V$ such that $V^\ast(p) = \Diamondblack \nomj$. Then  $V^{\ast}$ satisfies the antecedent of \eqref{Eq:First:Approx} under $V$; hence $\Diamond\nomi\leq \Box\cnomm$ holds under $V$. Conversely,  fix an interpretation $V$ of $p$,  $\nomj$ and $\cnomm$ such that
	$\Diamondblack \nomj \leq p$ and $\Diamond \Box  p \leq \cnomm$. Then, by monotonicity, $\Diamond\Box \Diamondblack \nomj\leq \Diamond\Box p\leq \cnomm$  i.e.~the antecedent of (\ref{After:Ack:Eq}) holds under $V$, and hence so does $\Diamond \nomj\leq \Box \cnomm$, as required. This is an instance of the  {\em Ackermann lemma} (\cite{Ack35}, see also \cite{Conradie:et:al:SQEMAI}).

	Taking stock,  we have equivalently transformed (\ref{Church:Rosser}) into (\ref{After:Ack:Eq}), which is a condition in which all propositional variables  have been eliminated, and all remaining variables are nominals or conominals.\footnote{Nominals and conominals range over elements of the
complex algebra $\bbA$ which correspond to first-order definable elements of the polarity
from which the underlying lattice of $\mathbb{A}$ arises. Hence, condition (\ref{After:Ack:Eq}) can be translated into a first order condition in the language of polarities (suitably augmented with relations $R_{\Box}$ and $R_\Diamond$), which will be the first order correspondent of the initial axiom on polarity-based frames. However, notice that the only properties that are used in this computation is that the elements of $\bbA$ over which nominals and conominals range are complete join-generators and meet-generators respectively. Hence, when applying ALBA for the purpose of computing rules, we can assume w.l.o.g.~that nominals and conominals range over arbitrary elements of $\bbA$. This observation immediately implies the soundness of the inception rules obtained from the (polarity safe) outputs of ALBA, since rules are equivalent to ALBA outputs.}

    Notice that the attempt at transforming condition (\ref{After:Ack:Eq}) into a structural rule as follows:
    \begin{center}
        \AXC{$\Diamond \Box \Diamondblack \nomj \fCenter \cnomm$}
        \UIC{$\Diamond \nomj \fCenter \Box \cnomm$}
        \DP
    \end{center}
    does not work, because  the $\Box$ operator in the premise occurs in precedent position. Not by chance, this is also the reason why the (Sahlqvist) axiom $\Diamond \Box p \vdash \Box \Diamond\Box p$ is not analytic inductive. However, still using the same  (ALBA) manipulations illustrated above,  condition (\ref{After:Ack:Eq}) can be further equivalently  rewritten so as to `unravel' the  nesting of operations, as follows: firstly, because nominals completely join-generate $\bbA$, we have:
    
\begin{equation}
	\bbA\models \forall \nomj \forall \cnomm \left( \bigvee \{\nomi\mid \nomi\leq \Diamond \Box \Diamondblack \nomj\} \leq \cnomm \Rightarrow \Diamond \nomj \leq \Box \cnomm \right).
	\end{equation}

Then, recalling that $\Diamond$ is completely join preserving,

\begin{equation}
	\bbA\models \forall \nomj \forall \cnomm \left( \bigvee \{\Diamond\nomi\mid \nomi\leq  \Box \Diamondblack \nomj\} \leq \cnomm \Rightarrow \Diamond \nomj \leq \Box \cnomm \right);
	\end{equation}
hence, by definition of supremum,
\begin{equation}
	\bbA\models \forall \nomj \forall \cnomm \left( \forall \nomi (\nomi\leq  \Box \Diamondblack \nomj\Rightarrow  \Diamond\nomi\leq \cnomm) \Rightarrow \Diamond \nomj \leq \Box \cnomm \right),
	\end{equation}

    \noindent and again, recalling that co-nominals completely meet-generate $\bbA$ and $\Box$ is completely meet-preserving,
    \begin{equation}
	\label{eq: polarity-safe}
	\bbA\models \forall \nomj \forall \cnomm \left( \forall \nomi (\forall \cnomn(\Diamondblack \nomj\leq \cnomn\Rightarrow \nomi\leq \Box \cnomn)\Rightarrow  \Diamond\nomi\leq \cnomm) \Rightarrow \Diamond \nomj \leq \Box \cnomm \right).
	\end{equation}
    Notice that all connectives in the condition above occur on the side of the inequalities which would allow them to be translated into structural connectives. This syntactic shape is captured by the following
    \begin{definition} 
    \label{def:polarity_safe}
    An ALBA-output is \emph{polarity-safe} when every inequality $\varphi \leq \psi$ in it is Skeleton (cf.~discussion after Definition \ref{Inducive:Ineq:Def}), and 
each nominal (resp.~conominal) in $+\varphi$ and $-\psi$ occurs positively (resp.~negatively).

\end{definition}
\begin{prop} \label{prop:polarity_safe_output}
(cf.~\cite[
Lemma 4.13]{palmigiano2024unifiedinversecorrespondencelelogics})  The ALBA-output of every inductive LE-inequality can be equivalently transformed, via ALBA-rules, into a polarity-safe shape.
\end{prop}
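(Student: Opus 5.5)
We plan to proceed by analysing the shape of the ALBA-output of a definite $(\Omega,\epsilon)$-inductive inequality $s\leq t$ (which suffices by Lemma \ref{lemma: from inductive to definite sequents}), and then to \emph{unravel} it inductively, exactly as in the worked example leading to \eqref{eq: polarity-safe}. Recall that, after first approximation, ALBA turns $s\leq t$ into a quasi-inequality of the form $\forall\vp\,\forall\nomj\,\forall\cnomm\,[(\nomj\leq s'\ \&\ t'\leq\cnomm\ \&\ \ldots)\Rightarrow \nomj\leq\cnomm]$. It then splits the Skeleton using the join/meet-preservation of SLR nodes, which produces inequalities $\nomj_k\leq \beta$ or $\beta\leq\cnomm_k$ with $\beta$ PIA. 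The $\epsilon$-critical PIA subterms are then inverted by adjunction/residuation, where the SRR side-arguments $\gamma_\ell$ are carried along as parameters. This yields minimal valuations $\mathsf{MV}(p_i)$, built from nominals, conominals, and (by condition 2(b) of Definition \ref{Inducive:Ineq:Def}) minimal valuations of $\Omega$-smaller variables. Finally the Ackermann lemma eliminates the variables in $\Omega$-order. First, we will record the invariant of the resulting output. Each inequality $\varphi\leq\psi$ in it is obtained by substituting minimal valuations into either a Skeleton inequality (the residual of the Skeleton, which is $\epsilon^\partial$-uniform in the non-critical positions) or a PIA inequality $\nomj\leq\beta$ / $\beta\leq\cnomm$. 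Moreover, the substituted terms $\mathsf{MV}(p_i)$ are built only from adjoints/residuals of PIA connectives, so they are themselves Skeleton-like (SLR-type) in the appropriate polarity.

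The second step will be a uniform rewriting procedure that turns each output inequality into a polarity-safe shape by nested quantification over fresh nominals/conominals. Take an inequality $\varphi\leq\psi$ in which some node is \emph{not} Skeleton, i.e.\ a PIA node (e.g.\ $+\Box$ in $\varphi$, or $-\Diamond$ in $\psi$). Consider an outermost offending subterm $\xi$ occurring, say, positively in $\varphi$, so that $\varphi = \varphi'[\xi]$ with the path from the root to $\xi$ consisting of Skeleton nodes. We replace $\varphi\leq\psi$ by $\forall\nomi\,(\nomi\leq\xi\Rightarrow\varphi'[\nomi]\leq\psi)$. This is justified by complete join-generation by nominals together with complete join-preservation of the Skeleton context $\varphi'$ in that position, exactly as in the step from \eqref{After:Ack:Eq} to \eqref{eq: polarity-safe}. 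Dually, a negative occurrence in $\psi$ is handled with a conominal and meet-generation. The new inequality $\nomi\leq\xi$ has a PIA term on the right, i.e.\ in the position where PIA nodes are Skeleton for $-\xi$. We then iterate on $\xi$ itself: its head is now acceptable, and any deeper Skeleton-in-wrong-position subterm is peeled off in the same way. When $\xi$'s head is a unary adjoint we may alternatively move it across by the adjunction rule, e.g.\ $\nomi\leq\Box\chi$ iff $\Diamondblack\nomi\leq\chi$.

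The third step will be termination and the polarity check. Each rewriting step strictly decreases the number of ``misplaced'' connective occurrences, measured as the multiset of depths of non-Skeleton nodes, so the process terminates. By construction, every introduced nominal occurs positively in the $+$ side where it appears as $\nomi\leq\ldots$ or inside $\varphi'[\nomi]$, and dually for conominals. We must additionally verify that the original nominals and conominals already occur with the right polarity. This follows because $\mathsf{MV}(p_i)$ is monotone in nominals and antitone in conominals, and critical occurrences are positioned according to $\epsilon$. The main obstacle we expect is this last polarity bookkeeping. Substituting $\mathsf{MV}(p_k)$ into a non-critical argument $\gamma_\ell$ of an SRR node with $\epsilon^\partial$-polarity must not flip a nominal into a negative position. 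We would handle it by an induction on $<_\Omega$, showing simultaneously that each $\mathsf{MV}(p_i)$ is a Skeleton term of sign $\epsilon_i$ with all nominals positive and all conominals negative. The agreement condition 2(a) of Definition \ref{Inducive:Ineq:Def} is precisely what makes the signs compose correctly.
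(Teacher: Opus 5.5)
Your Steps 1--2 match the first two phases of the procedure the paper sketches in Remark \ref{footn:polarity-safe}: eliminate the variables, then peel off non-Skeleton subterms by approximation. The paper itself defers the full proof to the cited lemma.

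The genuine gap is in Step 3, where you claim that the original nominals and conominals ``already occur with the right polarity''. This is false, and your own Step 2 is what breaks it. Moving $\xi$ across $\leq$ flips the sign of \emph{every} node of $\xi$, leaves included. You use the flip for internal nodes, since it is what makes the head of $\xi$ acceptable, but it equally flips every (co)nominal inside $\xi$.

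Example \ref{ex:1} already refutes your invariant. After Ackermann one has $\nomi\leq\Box(\Diamond\cnomm\circ\cnomm)$, where $\cnomm$ is correctly signed (negative in $-\psi$). Your Step 2 peels $\Diamond\cnomm\circ\cnomm$ and halts at $\forall\cnomn(\Diamond\cnomm\circ\cnomm\leq\cnomn\Rightarrow\nomi\leq\Box\cnomn)$. That formula is entirely Skeleton, but $\cnomm$ now occurs positively in $+(\Diamond\cnomm\circ\cnomm)$, so the output is not polarity-safe in the sense of Definition \ref{def:polarity_safe}. In general, a (co)nominal on a non-good branch ends up correctly signed only when an even number of peels has occurred above it.

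No property of $\mathsf{MV}(p_i)$ can prevent this. The problem comes from the non-good context into which $\mathsf{MV}(p_i)$ is substituted, not from $\mathsf{MV}(p_i)$ itself; here $\mathsf{MV}(p)=\cnomm$ is perfectly harmless. Besides, the properties you propose to prove are not right as stated. Maximal valuations are monotone, not antitone, in their conominals. Also, $\mathsf{MV}(p_i)$ need not be a Skeleton term: Definition \ref{Inducive:Ineq:Def} constrains the SRR side arguments $\gamma_\ell$ only in the signs of their leaves, and these arguments get copied into the minimal valuations.

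The missing ingredient is a final approximation step acting on leaves. This is exactly the step from line (4) to line (5) of Example \ref{ex:1}, and the third phase of Remark \ref{footn:polarity-safe}. Once $\varphi\leq\psi$ is Skeleton, a conominal $\cnomm$ occurring positively in $+\varphi$ is eliminated via $\varphi[\cnomm]\leq\psi$ iff $\forall\nomk(\nomk\leq\cnomm\Rightarrow\varphi[\nomk]\leq\psi)$, justified exactly like your peeling step. The new inequality $\nomk\leq\cnomm$ is polarity-safe. The remaining cases (a nominal occurring negatively in $+\varphi$, and wrongly signed (co)nominals in $-\psi$) are dual. Equivalently, you can count wrongly signed leaves as offending subterms in Step 2.

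Separately, your termination measure does not work. The flip also turns the Skeleton descendants of $\xi$ into PIA nodes, so a peel can increase both the number of non-Skeleton nodes and their multiset of depths. For instance, peeling $\Diamond\Box\Box\cnomm$ out of $\nomi\leq\Diamond\Box\Box\cnomm$ replaces one PIA node at depth $0$ with two, at depths $1$ and $2$. What strictly decreases is the number of Skeleton/PIA alternations along edges, counting a PIA root as one alternation. Flipping preserves the alternations inside $\xi$, and each peel removes the alternation between $\varphi'$ and the head of $\xi$. The leaf adjustments then remove wrongly signed leaves one at a time without creating new ones.
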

\subsection{Proper display calculi for basic normal LE-logics} \label{ssec:display}

In this section, we recall the definition of
the proper display calculus $\mathrm{D.LE}$ for the basic normal $\mathcal{L}$-logic for a fixed but arbitrary LE-signature $\mathcal{L} = \mathcal{L}(\mathcal{F}, \mathcal{G})$  (cf.~Section \ref{ssec: LE-logics}). 
Our presentation is a more streamlined version of the one introduced in \cite{ucaaptt} for DLE-logics and then straightforwardly generalized to LE-logics in \cite{ChnGrePalTzi21}. 
 
 Let $S_{\mathcal{F}}: = \{\FH \mid f\in \mathcal{F}^*\}$ and $S_{\mathcal{G}}: = \{\GC \mid g\in \mathcal{G}^*\}$ be the sets of structural connectives indexed by  $\mathcal{F}^*$ and $ \mathcal{G}^*$ respectively. Each structural connective comes with an arity and an order-type which coincides with those of its associated operational connective in $ \mathcal{F}^*$ and $\mathcal{G}^*$.
The calculus $\mathrm{D.LE}$ manipulates sequents $\Pi \fCenter \Sigma$, where 
$\Pi$ and $\Sigma$ are structures of two sorts which are built from formulas and are defined by the following simultaneous recursion:

\begin{center}
$\mathsf{Str}_\mathcal{F} \ni \Pi  \cceq A \mid \AATOP \mid \FH\, (\OXI) \ \ \ \ \ \ \ \ \ \ \mathsf{Str}_\mathcal{G} \ni \Sigma \cceq A \mid \ABOT \mid \GC\, (\OXI)$
\end{center}

\noindent where $A$ is an $\mathcal{L}_{\mathrm{LE}}$-formula, $\FH \in S_{\mathcal{F}}$, $\GC \in S_{\mathcal{G}}$, and $\OXI\in \mathsf{Str}_\mathcal{F}^{\epsilon_f}$ (resp.~$\OXI\in \mathsf{Str}_\mathcal{G}^{\epsilon_g}$), 
 where $\mathsf{Str}_\mathcal{F}^{\epsilon_f} = \mathsf{Str}_\mathcal{F}^{\epsilon_{f,1}}\times\cdots\times \mathsf{Str}_\mathcal{F}^{\epsilon_{f,n_f}}$ where $\mathsf{Str}_\mathcal{F}^\partial=\mathsf{Str}_\mathcal{G}$, and dually for $\mathsf{Str}_\mathcal{G}^{\epsilon_g}$. In what follows, for every $o \in S_{\mathcal{F}} \cup S_{\mathcal{G}}$ we use $o(\OXI)[\Pi]_i$ (resp.~$o(\OXI)[\Sigma]_i$) to indicate that the structure $\Pi$ (resp.~$\Sigma$) occurs in the $i$-th coordinate of the vector $\OXI$.

If $S_1$ and $S_2$ are sequents, the  double-horizontal-line notation 
{\fns
\AXC{$S_1$}
\LL{$r$}
\doubleLine
\UIC{$S_2$}
\DP}
is an abbreviation for the rules {\fns  
\AXC{$S_1$}
\LL{$r$}
\UIC{$S_2$}
\DP} 
and 
{\fns 
\AXC{$S_2$}
\RL{$r^{-1}$}
\UIC{$S_1$}
\DP}.

\begin{itemize}
\item Identity and Cut rules:
\end{itemize}

\begin{center}
\begin{tabular}{c}
\AXC{\phantom{$\Gamma \fCenter \varphi$}}
\LL{\fns Id}
\UI$p \fCenter p$
\DP
 \ \ \ 
\AX$\Pi \fCenter A$
\AX$A \fCenter \Sigma$
\RL{\fns Cut}
\BI$\Pi \fCenter \Sigma$
\DP
 \\
\end{tabular}
\end{center}

\begin{itemize}		
\item Display rules for $f\in \mathcal{F}$ and $g\in \mathcal{G}$: for any $1\leq k \leq n_f$ and $1\leq \ell \leq n_g$,\footnote{The notation $\FH \dashv \FCS_k$ (resp.~$\GHF_{\ell} \dashv \GC$) indicates that $\FH$ and $\FCS_k$ (resp.~$\GHF_{\ell}$ and $\GC$) are in a {\em residuated pair} and $\FCS_k$ (resp.~$\GHF_{\ell}$) is the right residual (resp.~left residual) of $\FH$ (resp.~$\GC$) in the $k$-th coordinate (resp.~$\ell$-th coordinate). The notation $(\GC, \GCF_{\ell})$ (resp.~$(\FH, \FHS_k)$) indicates that $\GC$ and $\GCF_{\ell}$ (resp.~$\FH$ and $\FHS_k$) are in a {\em Galois connection} (resp.~{\em dual Galois connection}) and $\GCF_{\ell}$ (resp.~$\FHS_k$) is the right residual (resp.~left residual) of $\GC$ (resp.~$\FH$) in the $\ell$-th coordinate (resp.~$k$-th coordinate).} 
\end{itemize}

If $\varepsilon_{f,k} = 1$ and $\varepsilon_{g,\ell} = 1$

\begin{center}
\begin{tabular}{cc}
 \\
\AXC{$\FH\, (\OXI)[\Pi]_k \fCenter \Sigma$}
\doubleLine
\LL{\fns $\FH \dashv \FCS_k$}
\UIC{$\Pi \fCenter \FCS_k\, (\OXI)[\Sigma]_k$}
\DP
 \ 
\AXC{$\Pi \fCenter \GC\, (\OXI)[\Sigma]_{\ell}$}
\doubleLine
\RL{\fns $\GHF_{\ell} \dashv \GC$}
\UIC{$\GHF_{\ell}\, (\OXI)[\Pi]_{\ell} \fCenter \Sigma$}
\DP 
\end{tabular}
\end{center}

If $\varepsilon_{f,k} = \partial$ and $\varepsilon_{g,\ell} = \partial$

\begin{center}
\begin{tabular}{c}
\AX$\FH\, (\OXI)[\Sigma]_k \fCenter \Sigma'$
\doubleLine
\LL{\fns $(\FH, \FHS_k)$}
\UI$\FHS_k\, (\OXI)[\Sigma']_k \fCenter \Sigma$
\DP
 \ 
\AX$\Pi' \fCenter \GC\, (\OXI)[\Pi]_{\ell}$
\doubleLine
\RL{\fns $(\GC, \GCF_{\ell})$}
\UI$ \Pi \fCenter \GCF_j\, (\OXI)[\Pi']_{\ell}$
\DP
\\
\end{tabular}
\end{center}



\begin{itemize}
\item Logical introduction rules for lattice connectives, for $i \in \{1,2\}$:\footnote{The calculus D.LE presented here can be refined for capturing  the basic normal DLE-logics, i.e.~the logics  of normal  lattice expansions based on {\em distributive} lattices. Noticing that, in distributive lattices, $\wedge$ (resp.~$\vee$) has the properties of a binary $\mathcal{F}$-connective (resp.~$\mathcal{G}$-connective) positive in both coordinates,  the introduction rules for $\wedge$ (resp.~$\vee$) given above can be replaced by the corresponding instantiations of the introduction rules for binary $\mathcal{F}$-connectives (resp.~$\mathcal{G}$-connectives); hence, structural connectives and their residuals will be added to the language of the calculus; finally, the standard structural rules of weakening, contraction, associativity and exchange will be added to the calculus.}
\end{itemize}

\begin{center}
\begin{tabular}{c}
\AX$A_i \fCenter \Sigma$
\LL{\fns $\aand_{Li}$}
\UI$A_1 \aand A_2 \fCenter \Sigma$
\DP
 \ 
\AX$\Pi \fCenter A$
\AX$\Pi \fCenter B$
\RL{\fns $\aand_R$}
\BI$\Pi \fCenter A \,\aand\, B$
\DP
\
\AXC{ \ }
\RL{\fns $\aatop$}
\UI$\Pi\fCenter \aatop$
\DP
 \\
    
 \\
 \
 \AXC{ \ }
\LL{\fns $\abot$}
\UI$\abot \fCenter \Sigma$
\DP
 \ 
\AX$A \fCenter \Sigma$
\AX$B \fCenter \Sigma$
\LL{\fns $\aor_L$}
\BI$A \,\aor\, B \fCenter \Sigma$
\DP
 \ 
\AX$\Pi \fCenter A_i$
\RL{\fns $\aor_{Ri}$}
\UI$\Pi \fCenter A_1 \aor A_2$
\DP
 \\
\end{tabular}
\end{center}

\begin{itemize}
\item Logical introduction rules for $f\in\mathcal{F}$ and $g\in\mathcal{G}$: 
\end{itemize}
				\begin{center}
					\begin{tabular}{c c c c}
						\bottomAlignProof
						\AX$\FH\, (\overline{A}) \fCenter \Sigma$
						\LL{\fns$f_L$}
						\UI$f(\overline{A}) \fCenter \Sigma$
						\DP
						&
						\bottomAlignProof
						\AX$\Pi \fCenter \GC\, (\overline{A})$
						\RL{\fns$g_R$}
						\UI$\Pi \fCenter g(\overline{A})$
						\DP
                        &
                        \bottomAlignProof
						\AxiomC{$\Big(\Xi_i \fCenter^{\!\!\epsilon_{f,i}}\, A_i \Big)_{1\leq i\leq n_f}$}
						\RL{\fns$f_R$}
						\UI$\FH\, (\OXI)\fCenter f(\overline{A})$
						\DP
                        &
                        \bottomAlignProof
						\AxiomC{$\Big(A_i \fCenter^{\!\!\epsilon_{{g,i}}}\; \Xi_i \Big)_{1\leq i\leq n_g}$}
						\LL{\fns$g_L$}
						\UI$g(\overline{A}) \fCenter \GC\, (\OXI)$
						\DP
					\end{tabular}
                \end{center}

		where $\OXI = (\Xi_1,\ldots, \Xi_{n_f})$ in $f_R$ and $\OXI = (\Xi_1,\ldots, \Xi_{n_g})$ in $g_L$.	
                In particular, if $f$ and $g$ are $0$-ary (i.e.~they are constants), the rules $f_R$ and $g_L$ above reduce to the axioms ($0$-ary rules) $\FH \fCenter f$ and $g \fCenter \GC$.

	The calculus $\mathrm{D.LE}$ is sound w.r.t.~the class of complete $\mathcal{L}$-algebras (cf.~\cite[Theorem 12]{ucaaptt} and \cite[Theorem 2.8]{LEframes}).
	 Moreover, D.LE is a proper display calculus (cf.~\cite[Theorem 26]{ucaaptt} and \cite[Appendix A]{LEframes}), and hence cut elimination holds for it as a consequence of a Belnap-style cut elimination metatheorem (cf.~\cite[Section 2.2 and Appendix A]{ucaaptt} and \cite[Theorem A.7]{LEframes}).

In the presentation of the language of D.LE, $\Sigma_i$, $\Pi_i$, and $\Xi_i$ are metavariables for structures. To formally present analytic structural rules, we need to make use of metastructures, i.e., structures that are constructed by structural metavariables. Let $\mathsf{MVar} = \mathsf{MVar}_{\mathcal{F}}\uplus \mathsf{MVar}_{\mathcal{G}}$ be the denumerable set of {\em metavariables} of sorts $\Pi_1, \Pi_2, \ldots\in \mathsf{MVar}_{\mathcal{F}}$ and $\Sigma_1, \Sigma_2, \ldots \in \mathsf{MVar}_{\mathcal{G}}$.
The sets $\mathsf{MStr}_\mathcal{F}$ and $\mathsf{MStr}_\mathcal{G}$ of the $\mathcal{F}$- and $\mathcal{G}$-{\em metastructures} are defined by simultaneous induction as follows:

\begin{center}
\begin{tabular}{c}
$\mathsf{MStr}_\mathcal{F} \ni U \ \cceq \  \Pi \mid \FH(\overline{U})$\quad \quad \quad
$\mathsf{MStr}_\mathcal{G} \ni V \ \cceq \  \Sigma \mid \GC(\overline{V})$
\end{tabular}
\end{center}
where $\Pi\in \mathsf{MVar}_{\mathcal{F}}$, $\Sigma\in \mathsf{MVar}_{\mathcal{G}}$, $\FH \in \mathcal{F}^\ast$ and $\GC \in \mathcal{G}^\ast$ and $\overline{U} \in \mathsf{MStr}_\mathcal{F}^{\epsilon_f}$,  and $\overline{V} \in \mathsf{MStr}_\mathcal{G}^{\epsilon_g}$, and for any order-type $\epsilon$ on $n$, we let $\mathsf{MStr}_\mathcal{F}^{\epsilon} \coloneqq \prod_{i = 1}^{n}\mathsf{MStr}_\mathcal{F}^{\epsilon_i}$ and $\mathsf{MStr}_\mathcal{G}^{\epsilon} \coloneqq \prod_{i = 1}^{n}\mathsf{MStr}_\mathcal{G}^{\epsilon_i}$, where for all $1 \leq i \leq n$,
\begin{center}
\begin{tabular}{ll}
$\mathsf{MStr}_\mathcal{F}^{\epsilon_i} = \begin{cases} 
\mathsf{MStr}_\mathcal{F} &\mbox{ if } \epsilon_i = 1\\
\mathsf{MStr}_\mathcal{G} &\mbox{ if } \epsilon_i = \partial
\end{cases}\quad$
&
$\mathsf{MStr}_\mathcal{G}^{\epsilon_i} = \begin{cases}
\mathsf{MStr}_\mathcal{G}& \mbox{ if } \epsilon_i = 1,\\
\mathsf{MStr}_\mathcal{F} & \mbox{ if } \epsilon_i = \partial.
\end{cases}$
\end{tabular}
\end{center}

An {\em analytic structural rule} is  a rule of the form 
\begin{center}
\AXC{$(U_1\fCenter V_1)[\overline{\Upsilon}_1]$}
\AXC{$\cdots$}
\AXC{$(U_n\fCenter V_n)[\overline{\Upsilon}_n]$}
\TIC{$(U_0 \fCenter V_0)[\overline{\Upsilon}_0]$}
\DP
\end{center}
where $\overline{\Upsilon}_i$ with $0 \leq i \leq n$ is the set of metavariables occurring in each sequent $U_i \fCenter V_i$, $\overline{\Upsilon}_0 \supseteq \overline{\Upsilon}_1 \cup \ldots \cup \overline{\Upsilon}_n$, and while structural metavariables might occur multiple times in the premises they occur only once in the conclusion. 
An instance of the rule $R$ is obtained from $R$ by uniformly substituting each structural metavariable $\Pi\in \mathsf{MVar}_{\mathcal{F}}$ with an element of $\mathsf{Str}_{\mathcal{F}}$ and every structural metavariable $\Sigma\in \mathsf{MVar}_{\mathcal{G}}$ with an element of $\mathsf{Str}_{\mathcal{G}}$. A calculus contains the analytic rule $R$ if it contains every instance of $R$.

\begin{prop}(cf.~\cite[Proposition 59, Proposition 61]{ucaaptt})
	\label{prop:type5} Every analytic $(\Omega, \varepsilon)$-inductive LE-inequality can be equivalently transformed, via an ALBA-reduction, into a set of analytic structural rules. Conversely, every analytical structural rule is equivalent to an $(\Omega, \varepsilon)$-inductive LE-inequality.
\end{prop}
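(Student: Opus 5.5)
For the forward direction, I would fix an analytic $(\Omega,\varepsilon)$-inductive inequality $s\leq t$. By Lemma \ref{lemma: from inductive to definite sequents} I would first reduce to the case where it is definite; splitting the $\Delta$-adjoint nodes $+\vee$ and $-\wedge$ in the Skeleton yields finitely many inequalities, each of which will give one rule. Then I would run ALBA as in Section \ref{subseq:informal}.

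The run goes as follows. By first approximation, every maximal PIA-subterm $\gamma$ in $+s$ or $-t$ is replaced by a fresh metavariable: a nominal $\nomj$ with $\nomj\leq\gamma$ if $+\gamma\prec +s$ or $+\gamma\prec -t$, and a conominal $\cnomm$ with $\gamma\leq\cnomm$ otherwise. This leaves the quasi-inequality $\&_\gamma(\ldots)\Rightarrow s'(\overline{\nomj},\overline{\cnomm})\leq t'(\overline{\nomj},\overline{\cnomm})$, in which $s'\leq t'$ is a Skeleton inequality.

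Next, the $\varepsilon$-critical occurrences, which all lie inside PIA-subterms because analytic branches are good, are isolated by residuation. This uses the residuals of SRA/SRR nodes, and the side conditions (a)–(b) of Definition \ref{Inducive:Ineq:Def} guarantee that the non-critical arguments only contain variables already solved earlier in $\Omega$. The result is minimal valuations $p\leq\mathsf{Mv}(p)$ or $\mathsf{Mv}(p)\leq p$, built from nominals and conominals using only adjoints. The Ackermann lemma then eliminates $\overline{p}$.

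The key point is that, since every branch is good, the non-critical occurrences of $p$ in the premises $\nomj\leq\gamma$ or $\gamma\leq\cnomm$ appear in PIA-position. After substituting $\mathsf{Mv}(p)$, those premises have $\gamma[\mathsf{Mv}/p]$ on the correct side: PIA on the right of a nominal, and dually. Hence each premise can be rewritten, by the residuation rules of $\mathcal{L}^\ast$, as an inequality between Skeleton-shaped terms in $\mathcal{L}^\ast$, with nominals positive and conominals negative. This is exactly the polarity-safe shape of Definition \ref{def:polarity_safe}, without the nested quantifiers of \eqref{eq: polarity-safe}, because no PIA term ever lands on the wrong side. I expect this bookkeeping to be the main obstacle: checking, by induction on the PIA-subtree, that the substitution of minimal valuations preserves the good-branch shape and polarity.

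Once the output is a flat quasi-inequality $\&_i(U_i\leq V_i)\Rightarrow U_0\leq V_0$, I would translate it into a rule. Nominals become $\mathcal{F}$-metavariables $\Pi$ and conominals become $\mathcal{G}$-metavariables $\Sigma$. Each term built from Skeleton connectives and their residuals becomes the corresponding structural metastructure via $\FH$ and $\GC$, which is well-typed thanks to the polarity conditions. This gives premises $U_i\fCenter V_i$ and conclusion $U_0\fCenter V_0$. Each fresh nominal or conominal occurs once in $s'\leq t'$, so metavariables occur once in the conclusion, and every variable in the premises also occurs in the conclusion. The rule is therefore analytic. Equivalence with the inequality follows because every ALBA step is an equivalence on complete LEs, as noted in the footnote on arbitrary values of nominals.

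For the converse, I would run the translation backwards on an analytic structural rule. Read its premises and conclusion as a quasi-inequality in nominals and conominals. The conclusion's metavariables each occur once, so they can be reverted to fresh propositional variables. Using the premises, I would then apply inverse Ackermann: replace each premise-bound metavariable by a variable $p$ together with the premise conditions, and re-fold the joins and meets using complete join/meet preservation. The inverse residuation steps only produce SRA/SRR nodes on branches ending in critical variables, so the resulting inequality is $(\Omega,\varepsilon)$-inductive, with $\Omega$ read off from the order in which the premises are discharged.
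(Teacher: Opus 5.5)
\textbf{Forward direction.} Your forward direction follows the standard ALBA argument of the cited source: definite preprocessing, first approximation of the Skeleton, minimal valuations for the critical PIA parts, Ackermann, then read-off. Your key observation is correct. In fact the residuation step you mention is unnecessary. After Ackermann, a premise $\nomj\le\gamma[\mathsf{Mv}/p]$ is already a Skeleton inequality, for two reasons. Flipping the sign of a PIA tree yields a Skeleton tree ($+\wedge\mapsto-\wedge$, $+g\mapsto-g$, and so on). And $\mathsf{Mv}(p)$ is substituted at a node whose sign makes it Skeleton.

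There is one omission you should fix. Lattice connectives reappear in the premises after Ackermann:
\begin{itemize}
\item $\bigvee_i\mathsf{Mv}_i(p)$, when $p$ has several critical occurrences;
\item $-\wedge$ and $+\vee$, coming from the SRA nodes $+\wedge$ and $-\vee$ inside the PIA parts.
\end{itemize}
D.LE has no structural counterparts for $\wedge$ and $\vee$. So before translating you must distribute these connectives through the Skeleton operators up to the root and split the premise. Without this step, your claim that every Skeleton term becomes a metastructure via $\FH$ and $\GC$ fails for $\Delta$-adjoints.

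\textbf{Converse direction.} Here there is a genuine gap. ``Inverse Ackermann'', with $\Omega$ ``read off from the order in which premises are discharged'', presupposes that the premises already look like an ALBA output. That is, they would have to be Skeleton terms into which minimal valuations have been substituted, each premise solving for some variable. An arbitrary analytic rule need not have this shape. Take the rule with premise $X\hat{\circ}X\vdash Y\check{\star}Y$ and conclusion $X\vdash Y$, which satisfies C$_1$--C$_7$. In its premise neither $X$ nor $Y$ can be isolated on one side free of itself. So there is no minimal valuation to un-substitute and no order to read off.

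The missing idea is to let the conclusion metavariables themselves serve as the minimal valuations, and to carry each premise along as a conjunct or disjunct. Concretely:
\begin{itemize}
\item Display each premise on one occurrence of a metavariable, obtaining $X\vdash W_i$ or $W_i\vdash Y$.
\item In $U_0^\tau\le V_0^\tau$, substitute $p_X\wedge\bigwedge_i W_i^\tau$ for $X$ and $q_Y\vee\bigvee_i W_i^\tau$ for $Y$, with metavariables inside each $W_i$ replaced by the corresponding $p$'s and $q$'s.
\item Set $\varepsilon(p_X)=1$ and $\varepsilon(q_Y)=\partial$.
\end{itemize}
Running ALBA, first approximation plus splitting leaves $\nomj_X\le p_X$ and $q_Y\le\cnomm_Y$ as the only critical conditions. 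Ackermann then returns the rule's quasi-inequality up to display equivalence.

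By C$_4$, every leaf inside $W_i^\tau$ is non-critical, including further occurrences of $X$ itself. Hence every branch is Skeleton followed by PIA. The critical branches cross only the added $+\wedge$ and $-\vee$ nodes. The PIA nodes produced by displaying the premises lie on non-critical branches, not on critical ones as you claim. Consequently $\Omega=\varnothing$ works.

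In the example above this construction gives $p\wedge((q\star q)/p)\le q$. In general the resulting inequality lives in $\mathcal{L}^\ast$, since displaying introduces residuals.
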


\section{Beyond analytic inductive axioms} \label{sec:inception}

The purpose of this contribution is to extend the class of axioms which can be captured by structural rules which preserve  cut-elimination  when added to a proper display calculus. Since  analytic structural rules correspond precisely to the class of analytic inductive inequalities (cf.~Definition \ref{Inducive:Ineq:Def}), in order to capture inductive axioms we introduce structural rules with higher-order assumptions. We show that these rules greatly enhance the expressive power of proper display calculi while preserving  cut-elimination via a Belnap-style metatheorem. We refer to proper display calculi augmented with these rules as \emph{inception display calculi}.

\subsection{Inception display calculi} 
Inception display calculi augment analytic structural rules with a certain type of higher-order assumptions,  defined below.

\begin{definition} \label{def:depthrules}
 Let us recursively define  \emph{depth-$n$ inception rules}  and \emph{depth-$n$ contracts} (for any $n\geq 0$).
\begin{itemize}
    \item Depth-$0$ inception rules are the analytic structural rules. Depth-$0$ contracts are just sequents built from structure metavariables.

    \item Case $n>0$. Assuming that  depth-$k$ inception rules and contracts have been defined for every $k < n$, a depth-$n$ contract is an expression of the form $\left[ \Pi\fCenter\Sigma\right]^\mathcal{R}_\mathcal{X}$, where the set of contract-rules $\mathcal{R}$ is a set of inception rules of depth smaller than $n$, and containing at least one inception rule of depth $n-1$, while $\mathcal{X}$ is a set of structure metavariables that should be understood as universally quantified and hence should not be instantiated in concrete applications of the rule. We refer to $\mathcal{R}$ as the set of \emph{contract rules}. In $\left[ \Pi\fCenter\Sigma\right]^\mathcal{R}_\mathcal{X}$, $\Pi,\Sigma$, and every rule in $\mathcal{R}$ may contain structure variables from $\mathcal{X}$, but not necessarily. We say that the contract $\left[ \Pi\fCenter\Sigma\right]^\mathcal{R}_\mathcal{X}$ is \emph{fulfilled} or \emph{holds} if $\Pi \fCenter \Sigma$ (referred to as the {\em aim} of the contract) is derivable using the rules of the base proper display calculus, possibly (but not necessarily) together with those in $\mathcal{R}$.  
    
    A depth-$n$ inception rule is a structural rule whose premises are contracts of depth not greater than $n$, where at least one of them has depth $n-1$. Contracts of positive depth are referred to as \emph{higher-order assumptions}. 
\end{itemize}
An \emph{inception rule} is a depth-$n$ inception rule for some $n \geq 1$.
The general shape of an inception rule is

\begin{center}
    \AXC{$\Pi_1 \fCenter \Sigma_1\ \ \cdots\ \ \Pi_n \fCenter \Sigma_n\ \ \ \ \left[ \Pi_{n+1} \fCenter \Sigma_{n+1}\right]^{\mathcal{R}_1}_{\mathcal{X}_1}\ \ \cdots\ \ \left[ \Pi_{n+m} \fCenter \Sigma_{n+m}\right]^{\mathcal{R}_m}_{\mathcal{X}_m}$}
    \RL{.}
\UIC{$\Pi \fCenter \Sigma$}
\DP

\end{center}

An instance of an inception rule is obtained by

\begin{enumerate}
    \item uniformly substituting each instantiable structural metavariable $\Pi\in \mathsf{MVar}_{\mathcal{F}}$ (resp.~$\Sigma\in \mathsf{MVar}_{\mathcal{G}}$) with an element of $\mathsf{Str}_{\mathcal{F}}$ (resp.~$\mathsf{Str}_{\mathcal{G}}$);
    \item providing, for each higher-order assumption $\left[ \Pi\fCenter\Sigma\right]^\mathcal{R}_\mathcal{X}$ of the rule, a derivation $\pi$ fullfilling the corresponding contract.
\end{enumerate}  
\end{definition}

\begin{example}
    In Section \ref{subseq:informal}, we computed the first-order correspondent of the LE-axiom $\Diamond \Box p \vdash \Box \Diamond\Box p$, obtaining  
    \[\forall \nomj \forall \cnomm ( \Diamond \Box \Diamondblack \nomj \leq \cnomm \Rightarrow \Diamond \nomj \leq \Box \cnomm ).\]
    This sentence is not translatable into an analytic structural rule, because the premise $\Diamond \Box \Diamondblack \nomj \leq \cnomm$ is not translatable into a sequent. However, since the inequality $\Diamond \Box \Diamondblack \nomj \leq \cnomm$ is equivalent to the expression 
    \[\forall \nomi (\forall \cnomn(\Diamondblack \nomj\leq \cnomn\Rightarrow \nomi\leq \Box \cnomn)\Rightarrow  \Diamond\nomi\leq \cnomm),\]
    the premise can be rewritten as the higher-order assumption $[\hat{\Diamond}I \vdash M]^{R}_{\{I\}}$, where $R$ is the contract rule
    \begin{center}
        \AXC{$\hat{\Diamondblack}J \vdash N$}
        \RL{$R$}
        \UIC{$I \vdash \check{\Box}N$}
        \DP
    \end{center}
\end{example}

\begin{notation} \label{notation_dream}
    In applying an inception rule with $\left[ \Pi\fCenter\Sigma\right]^\mathcal{R}_\mathcal{X}$ as a higher-order assumption, in drawing the derivation tree we will sometimes write $\left[ \pi\right]^\mathcal{R}_\mathcal{X}$ in place of $\left[ \Pi\fCenter\Sigma\right]^\mathcal{R}_\mathcal{X}$, where $\pi$ is a derivation of $\Pi\fCenter\Sigma$ witnessing the lawful application of the rule. Under these circumstances, we say that $\pi$ is a witness for the contract $\left[ \Pi\fCenter\Sigma\right]^\mathcal{R}_\mathcal{X}$. We omit subscripts and superscripts when they are clear from the context. In the case just described, we evocatively say that $\pi$ \emph{is in a dream}. 
\end{notation}

    If the higher-order assumption $\left[ \Pi\fCenter\Sigma\right]^\mathcal{R}_\mathcal{X}$ is fulfilled by $\pi$ in the application of an inception rule in a derivation $\pi'$, we consider $\pi$ to be a subderivation of $\pi'$. This implies that a derivation containing an instance of the cut rule inside one of its dreams is not cut-free.

\begin{definition}
    A derivation has \emph{finite dreams} if it contains a finite number of instances of contracts. Since we regard derivations living in dreams as proper parts of the original derivation (cf.~discussion below Notation \ref{notation_dream}), having finite dreams coincides with being well-founded w.r.t.~`inception depth'. 
\end{definition}

A core assumption of the present paper is that  all derivations have finite dreams; moreover, we assume that in each derivation, the structure variables that parametrize contracts are all different.

\subsection{Generating inception rules}

In this subsection, we describe how to translate from the polarity-safe ALBA output of an inductive $\mathrm{LE}$-inequality (cf.~Definition \ref{def:polarity_safe} and Proposition \ref{prop:polarity_safe_output}) to its equivalent inception rule(s). We begin with some preliminary definitions, refining the class of inductive formulas, subdividing them according to their complexity in terms of the alternation between Skeleton and PIA connectives.

\begin{definition} \label{def:branch_depth}
    A branch in a signed generation tree (cf.~Section \ref{ssec: inductive}) $*s$, with $* \in \{+,-\}$, has \emph{depth} $n$ if it is the concatenation of $n$ paths $P_0, \dots, P_n$, such that each $P_i$ is composed of two segments, the first (considering the direction from the root to the leaf) composed only of Skeleton nodes and the second composed only of PIA nodes. When $1<i<n$, both segments of $P_i$ are required to be nonempty, while in $P_0$ only its PIA segment is required to be nonempty. Finally, when $n>0$, in $P_n$ only its Skeleton segment is required to be nonempty.
\end{definition}

\begin{definition} \label{def: depth_n_inductive}
    An \emph{inductive inequality} has \emph{depth} $n$ if it has at least one branch of depth $n$ and every branch has depth not greater than $n$. 
\end{definition}

From Definition \ref{def: depth_n_inductive}, it follows that analytic inductive inequalities have depth $0$.

\begin{example}

Consider the inductive inequality $\Box(\Diamond p \circ p) \circ \Diamond p \leq p$ such that $\{\wdia, \circ\} \subseteq \mathcal{F}$, $\wbox \in \mathcal{G}$, $n_\circ=2$, and $n_\wdia = n_\wbox = \varepsilon_\wdia(1)=\varepsilon_\wbox(1)=\varepsilon_\circ(1)=\varepsilon_\circ(2)=1$. The signed generation tree (cf.~Definition \ref{def: signed gen tree}) of the inequality is the following, where the instances of propositional atoms are marked with subscripts to distinguish the branches.

\begin{center}
	\begin{tikzpicture}
		\tikzstyle{level 1}=[level distance=1cm, sibling distance=2.5cm]
		\tikzstyle{level 2}=[level distance=1cm, sibling distance=2.5cm]
		\tikzstyle{level 3}=[level distance=1 cm, sibling distance=1.5cm]
        \tikzstyle{level 4}=[level distance=1 cm, sibling distance=1.5cm]
        \tikzstyle{level 5}=[level distance=1 cm, sibling distance=1.5cm]
        
		\node[Ske] at (-2,0) {$\begin{aligned} +\circ \end{aligned}$}
		child{node[PIA]{$\begin{aligned}+\Box\end{aligned}$}
			child{node[Ske]{$\begin{aligned}+\circ\end{aligned}$}
				child{node[Ske]{
                    {$\begin{aligned}+\Diamond\end{aligned}$}
                }
                child{node{$+p_1$}}
                }
				child{node{$+p_2$}}
			}
		}
        child{node[Ske]{$\begin{aligned}+\Diamond\end{aligned}$}
            child{node{$+p_3$}}
        }
		;
		\node at (0,0) {$\le$}; 
		
		\node at (2,0) {$-p_4$};
	\end{tikzpicture}
\end{center}

In the tree above, the branches leading to $p_1$ and $p_2$ have depth $1$, while the other branches have depth $0$, meaning that $\Box(\Diamond p \circ p) \circ \Diamond p \leq p$ has depth $1$. 
\end{example}

The translation is defined as follows. We start from a generic polarity-safe ALBA output of an inductive inequality of depth $n>0$:
\begin{equation} \label{eq:ALBA_output}
\forall \overline{\nomj} \forall \overline{\cnomm} (\overline{\nomj \leq \alpha}\ \&\ \overline{\beta\leq\cnomm}\ \&\ \bigwith \overline{\Gamma_1} \Rightarrow \varphi \leq \psi),
\end{equation}
where every $\nomj \leq \alpha$ in $\overline{\nomj\leq\alpha}$, every $\beta\leq\cnomm$ in $\overline{\beta\leq\cnomm}$, and $\varphi \leq \psi$ are pure definite Skeleton inequalities. Furthermore, every (co)nominal of $\overline{\nomj}$ and $\overline{\cnomm}$ appears exactly once in $\varphi \leq \psi$, because it comes from a first approximation step (cf.~Example \ref{ex:1}), and does not appear in any $\alpha$ in $\ol{\alpha}$ or $\beta$ in $\ol{\beta}$. Furthermore, all the (co)nominals occurring in $\ol{\nomj \leq \alpha}, \ol{\beta \leq \cnomm}$, $\varphi_1$, and $\psi_1$ are from $\ol{\nomj}$,$\ol{\cnomm}$, with the exception of the (co)nominals in $\ol{\nomj_1}$,$\ol{\cnomm_1}$. Every $\Gamma_1$ in $\overline{\Gamma_1}$ is an expression of one of the forms
\[\forall \overline{\nomj_1}\forall \overline{\cnomm_1}\left( \bigwith_k\left(\forall \overline{\nomh_{1,k}}\forall\overline{\cnomn_{1,k}}(\overline{\alpha_{1,k} \leq \beta_{1,k}}\ \&\ \bigwith \overline{\Gamma_{2,k}}\Rightarrow \gamma_{1,k} \leq\delta_{1,k}) \right)\Rightarrow \nomj \leq \psi_1\right)\]
\[\forall \overline{\nomj_1}\forall \overline{\cnomm_1}\left( \bigwith_k\left(\forall \overline{\nomh_{1,k}}\forall\overline{\cnomn_{1,k}}(\overline{\alpha_{1,k} \leq \beta_{1,k}}\Rightarrow \gamma_{1,k} \leq\delta_{1,k}) \right)\Rightarrow \nomj \leq \psi_1\right)\]
\[\forall \overline{\nomj_1}\forall \overline{\cnomm_1}\left( \bigwith_k\left(\forall \overline{\nomh_{1,k}}\forall\overline{\cnomn_{1,k}}(\overline{\alpha_{1,k} \leq \beta_{1,k}}\ \&\ \bigwith \overline{\Gamma_{2,k}}\Rightarrow \gamma_{1,k} \leq\delta_{1,k}) \right)\Rightarrow \varphi_1 \leq \cnomm\right)\]
\[\forall \overline{\nomj_1}\forall \overline{\cnomm_1}\left( \bigwith_k\left(\forall \overline{\nomh_{1,k}}\forall\overline{\cnomn_{1,k}}(\overline{\alpha_{1,k} \leq \beta_{1,k}}\Rightarrow \gamma_{1,k} \leq\delta_{1,k}) \right)\Rightarrow \varphi_1 \leq \cnomm\right)\]
where, for all $k$, every $\alpha \leq \beta$ in $\overline{\alpha_{1,k}\leq\beta_{1,k}}$, $\gamma_{1,k}\leq\delta_{1,k}$, $\varphi_1\leq\cnomm$, and $\nomj\leq\psi_1$ are pure Skeleton inequalities and every $\Gamma$ in $\overline{\Gamma_{2,k}}$ is a more complex expression. In general, for $i$ in $\{1,\ldots,n-1\}$, every $\Gamma_i$ in $\overline{\Gamma_i}$ has one of the shapes

\[\forall \overline{\nomj_i}\forall \overline{\cnomm_i}\left( \bigwith_k\left(\forall \overline{\nomh_{i,k}}\forall\overline{\cnomn_{i,k}}(\overline{\alpha_{i,k} \leq \beta_{i,k}}\ \&\ \bigwith \overline{\Gamma_{i+1,k}}\Rightarrow \gamma_{i,k} \leq\delta_{i,k}) \right)\Rightarrow \varphi_i \leq \psi_i\right)\ \text{ or}\]
\[\forall \overline{\nomj_i}\forall \overline{\cnomm_i}\left( \bigwith_k\left(\forall \overline{\nomh_{i,k}}\forall\overline{\cnomn_{i,k}}(\overline{\alpha_{i,k} \leq \beta_{i,k}}\Rightarrow \gamma_{i,k} \leq\delta_{i,k}) \right)\Rightarrow \varphi_i \leq \psi_i\right)\]

Finally, every $\Gamma_n$ in $\overline{\Gamma_n}$ is of the form
\[\forall \overline{\nomj_n}\forall \overline{\cnomm_n}\left( \bigwith_k\left(\forall \overline{\nomh_{n,k}}\forall\overline{\cnomn_{n,k}}(\overline{\alpha_{n,k} \leq \beta_{n,k}}\Rightarrow \gamma_{n,k} \leq\delta_{n,k}) \right)\Rightarrow \varphi_n \leq \psi_n\right).\]

Every nominal in $\ol{\nomj_i}$ and conominal in $\ol{\cnomm_i}$ occurs exactly once in $\varphi_i \leq \psi_i$; furthermore, every nominal in $\ol{\nomh_{i,k}}$ and conominal in $\ol{\cnomn_{i,k}}$ occurs exactly once in $\gamma_{i,k} \leq \delta_{i,k}$, because all (co)nominals are introduced via approximation rules \cite[Section 4]{conradie2019algorithmic}.
All the (co)nominals occurring in $\ol{\alpha_{i,k} \leq \beta_{i,k}}$ not from $\ol{\nomj}$,$\ol{\cnomm}$ are from $\ol{\nomh_{i,k}}$,$\ol{\cnomn_{i,k}}$, while all the (co)nominals occurring in $\varphi_{i+1} \leq \psi_{i+1}$ that are not in $\ol{\nomj_{i+1}}$,$\ol{\cnomm_{i+1}}$ are bounded by one of the (co)nominal variables from $\ol{\nomh_{i,k}}$,$\ol{\cnomn_{i,k}}$. Finally, all the (co)nominals occurring in $\gamma_{i,k} \leq \delta_{i,k}$ that are not in $\ol{\nomh_{i,k}}$,$\ol{\cnomn_{i,k}}$ come from $\ol{\nomj_i}$,$\ol{\cnomm_i}$

Having described the general shape of the polarity-safe ALBA output of a depth $n$ definite inductive inequality, it is now easy to extract an equivalent depth-$n$ analytic inception rule from it. The rule is defined as follows:

\begin{center}
\AXC{$\overline{s(\alpha)\fCenter s(\cnomm)}$}
\AXC{$\overline{s(\nomj)\fCenter s(\beta)}$}
\AXC{$\overline{\left[s(\varphi_1) \fCenter s(\psi_1)\right]^{\mathcal{R}_1}_{\{\overline{s(\nomj_1)},\overline{s(\cnomm_1)}\}}}$}
\TIC{$s(\varphi) \fCenter s(\psi)$}
\DP
\end{center}
where $s$ consistently maps every nominal and conominal to a structure variable and every connective to its structural counterpart.\footnote{The structural counterpart of a connective $f \in \mathcal{F}^*$ (resp.~$g \in \mathcal{G}^*$) is the structural connective $\hat{f} \in S_\mathcal{F}$ (resp.~$\check{g} \in S_\mathcal{G}$), as defined in Section \ref{ssec:display}.}  In the rule above, the number of contracts of positive depth is exactly the cardinality of $\overline{\Gamma_1}$, and every $R_{1,k}$ in $\mathcal{R}_1$ is of one of the shapes
\begin{center}
\begin{tabular}{ccc}
\AXC{$\overline{s(\alpha_{1,k})\fCenter s(\beta_{1,k})}$}
\UIC{$s(\gamma_{1,k}) \fCenter s(\delta_{1,k})$}
\DP &
or &
\AXC{$\overline{s(\alpha_{1,k})\fCenter s(\beta_{1,k})}$}
\AXC{$\overline{\left[s(\varphi_{2}) \fCenter s(\psi_2)\right]^{\mathcal{R}_2}_{\{\overline{s(\nomj_2)},\overline{s(\cnomm_2)}\}}}$}
\BIC{$s(\gamma_{1,k}) \fCenter s(\delta_{1,k})$}
\DP
\end{tabular}
\end{center}
In general, for $i$ in $\{1,\ldots,n-1\}$, every $R_{i,k}$ in some $\mathcal{R}_i$ corresponds to an expression $\Gamma_{i,k}$ in $\overline{\Gamma_{i,k}}$ and has one of the shapes
\begin{center}
\begin{tabular}{ccc}
\AXC{$\overline{s(\alpha_{i,k})\fCenter s(\beta_{i,k})}$}
\UIC{$s(\gamma_{i,k}) \fCenter s(\delta_{i,k})$}
\DP &
 or &
\AXC{$\overline{s(\alpha_{i,k})\fCenter s(\beta_{i,k})}$}
\AXC{$\overline{\left[s(\varphi_{i+1}) \fCenter s(\psi_{i+1})\right]^{\mathcal{R}_{i+1}}_{\{\overline{s(\nomj_{i+1})},\overline{s(\cnomm_{i+1})}\}}}$}
\BIC{$s(\gamma_{i,k}) \fCenter s(\delta_{i,k})$}
\DP
\end{tabular}
\end{center}
Finally, every $R_{n,k}$ in a $\mathcal{R}_n$ corresponds to a $\Gamma_{n,k}$ in $\overline{\Gamma_{n,k}}$ and has the shape
\begin{center}
\AXC{$\overline{s(\alpha_{n,k})\fCenter s(\beta_{n,k})}$}
\UIC{$s(\gamma_{n,k}) \fCenter s(\delta_{n,k})$}
\DP
\end{center}

We conclude with a concrete example.

\begin{example}
    Consider the depth-$1$ inductive inequality $(\Box p\ \circ\ \Box{\lhd} p)\circ\Diamond q \leq \ {\lhd} p\star \Diamond \Box q$ in a signature where $\{\Diamond, \circ, \lhd\} \subseteq \mathcal{F}$, $\{\Box,\star\}\subseteq\mathcal{G}$, such that $n_\Diamond = n_\lhd=n_\Box = 1$, $n_\circ = n_\star = 2$ and all the operators are monotone in every coordinate, with the exception of $\lhd$, which is antitone. The residuals of $\Box$ and $\lhd$ are denoted with $\Diamondblack$ and $\blacktriangleleft$, respectively. Following the steps outlined above, we observe that its polarity-safe ALBA output is
    \[\forall \nomi_1 \forall \nomi_2 \forall \nomi_3 \forall \cnomm_1 \forall \cnomm_2 (\Gamma\ \&\ \Gamma' \Rightarrow (\nomi_1\ \circ\ \nomi_2)\circ\Diamond \nomi_3 \leq \ \cnomm_1\star \cnomm_2),\]
    where $\Gamma$ and $\Gamma'$ are the expressions
    \[\forall \cnomn(\forall \cnoml(\Diamondblack\nomi_1\leq\cnoml\ \&\ {\blacktriangleleft}\cnomm_1\leq\cnoml \Rightarrow \lhd\cnoml \leq \cnomn) \Rightarrow \nomi_2\leq \cnomm_2)\ \text{ and}\]
    \[\forall \nomj(\forall \cnomo(\nomi_3\leq\cnomo\Rightarrow\nomj\leq\Box\cnomo)\Rightarrow\Diamond\nomj\leq\cnomm_2)\]
    respectively. The corresponding depth-1 inception rule is
    \begin{center}
\AXC{$\left[s(\nomi_2) \vdash s(\Box \cnomn)\right]^{\mathcal{R}}_{\{s(\cnomn)\}}$}
\AXC{$\left[s(\Diamond\nomj) \vdash s(\cnomm_2)\right]^{\mathcal{R}'}_{\{s(\nomj)\}}$}
\BIC{$s((\nomi_1\ \circ\ \nomi_2)\circ\Diamond \nomi_3) \vdash s(\ \cnomm_1\star \cnomm_2)$}
\DP
\end{center}
    where $\mathcal{R}$ and $\mathcal{R}'$ are singletons containing the rules

\begin{center}
\begin{tabular}{ccc}
\AXC{$s(\Diamondblack\nomi_1) \vdash s(\cnoml)$}
\AXC{$s({\blacktriangleleft}\cnomm_1) \vdash s(\cnoml)$}
\BIC{$s(\lhd\cnoml) \vdash s(\cnomn)$}
\DP &
and &
\AXC{$s(\nomi_3) \vdash s( \cnomo)$}
\UIC{$s(\nomj) \vdash s(\Box \cnomo)$}
\DP
\end{tabular}
\end{center}
    respectively. Finally, if $s(\nomi_k)=X_k$, $s(\cnomm_{k'})= Y_{k'}$, $s(\cnomn)=Z_1$, $s(\nomj)=Z_2$, $s(\cnoml)=W$, and $s(\cnomo)=S$, after we translate everything we get

\begin{center}
\AXC{$\left[X_2 \vdash \check{\Box} Z_1\right]^{\mathcal{R}}_{\{Z_1\}}$}
\AXC{$\left[\hat{\Diamond}Z_2 \vdash Y_2\right]^{\mathcal{R}'}_{\{Z_2\}}$}
\BIC{$(X_1\ \hat{\circ}\ X_2)\ \hat{\circ}\ \hat{\Diamond} X_3 \vdash  Y_1\ \check{\star}\ Y_2$}
\DP
\end{center}
    where $\mathcal{R}$ and $\mathcal{R}'$ are singletons containing the rules

    \begin{center}
\begin{tabular}{ccc}
\AXC{$\hat{\Diamondblack}X_1 \vdash W$}
\AXC{$\hat{\blacktriangleleft}Y_1 \vdash W$}
\BIC{$\hat{\lhd}W \vdash Z_1$}
\DP &
and &
\AXC{$X_3 \vdash S$}
\UIC{$Z_2 \vdash \check{\Box} S$}
\DP
\end{tabular}
\end{center}
respectively.
\end{example}

\subsection{Examples: ALBA-generated inception rules} \label{ssec:examples}
The prime class of inception rules with finite dreams is generated by running the algorithm ALBA \cite{conradie2019algorithmic} on inductive LE-inequalities until a polarity-safe shape has been reached, and then translating the polarity-safe ALBA output. In what follows, we illustrate this procedure by way of examples.

\begin{remark}\label{footn:polarity-safe}
It is always possible to reach a polarity-safe output starting from an inductive inequality. We first eliminate the propositional variables (cf.~steps 1 to 3 in Example \ref{ex:1}), then we decompose further any non-Skeleton formula via approximation rules \cite[Section 4]{conradie2019algorithmic} until only Skeleton terms remain (cf.~step 3 to 4 in Example \ref{ex:1}), and finally, if needed, we adjust the polarity of nominals and conominals by applying more approximation rules (cf.~step 4 to 5 in Example \ref{ex:1}).
 Let us refer to  the nominals and conominals introduced in the first approximation step  as the \emph{extractors}. Notice that, as illustrated by the example below, after reaching the polarity-safe shape, the extractors occurring on the left-hand side of the main quasi-inequality (i.e.~those marked in red) always occur within inequalities {\em negative position}, i.e.~inequalities which are nested on the antecedent side of an odd number of quasi-inequalities.
 \end{remark}

\begin{example}\label{ex:1}To begin with, consider an $\mathrm{LE}$-signature such that $\mathcal{F} = \{\wdia,\circ\}$, $\mathcal{G} = \{\wbox\}$, $n_\circ=2$, and $n_\wdia = n_\wbox = \varepsilon_\wdia(1)=\varepsilon_\wbox(1)=\varepsilon_\circ(1)=\varepsilon_\circ(2)=1$. The axiom 
 \begin{equation*} \label{eq:first_equation}
     \Box(\Diamond p \circ p) \circ \Diamond p \fCenter p
 \end{equation*} is definite inductive but not analytic inductive. We run ALBA on it until we reach a polarity-safe form. In the remainder, the very first step of an ALBA run is referred to  as the \emph{first approximation step}.

{{ 
\begin{center}
\begin{tabular}{@{}clr@{}}
& ALBA run computing the inception rule for $\Box(\Diamond p \circ p) \circ \Diamond p \fCenter p$: \\
\hline
    \ &$\Box(\Diamond p \circ p) \circ \Diamond p \leq p$ & \ (1)\\
iff \ & $\forall p \forall \nomi \forall \nomj \forall \cnomm [ \rnomi \leq \Box(\Diamond p \circ p)\ \&\ \rnomj \leq p\ \&\ p\leq \rcnomm \Rightarrow \bnomi \circ \Diamond \bnomj \leq \bcnomm]$ &  (2)\\
iff \ & $\forall \nomi \forall \nomj \forall \cnomm [ \rnomi \leq \Box(\Diamond \rcnomm \circ \rcnomm)\ \&\ \rnomj \leq \rcnomm \Rightarrow \bnomi \circ \Diamond \bnomj \leq \bcnomm]$ &  (3)\\
iff \ & $\forall \nomi \forall \nomj \forall \cnomm [ \forall\cnomn(\Diamond \rcnomm \circ \rcnomm \leq \cnomn \Rightarrow\rnomi\leq\Box\cnomn)\ \&\ \rnomj \leq \rcnomm \Rightarrow \bnomi \circ \Diamond \bnomj \leq \bcnomm]$ &  (4)\\
iff \ & $\forall \nomi \forall \nomj \forall \cnomm [ \forall\cnomn(\forall\nomk \forall\nomh(\nomk\leq\rcnomm\ \&\ \nomh \leq \rcnomm \Rightarrow \Diamond\nomk \circ \nomh \leq \cnomn) \Rightarrow\rnomi\leq\Box\cnomn)\ \&\ \rnomj \leq \rcnomm \Rightarrow \bnomi \circ \Diamond \bnomj \leq \bcnomm]$ &  (5)\\
\end{tabular}
\end{center}
}}

From the last line of the derivation above we can obtain both the first-order correspondent of $\Box(\Diamond p \circ p) \circ \Diamond p \fCenter p$, and the unary inception rule of depth $1$
\begin{center}
\AXC{$Y\fCenter Z$}
\AXC{$\left[X\fCenter \check{\Box}N \right]^\mathcal{R}_{\{N\}}$}
\LL{$R_0$}
\BIC{$X\ \hat{\circ}\ \hat{\Diamond} Y \fCenter Z$}
\DP
\end{center}
where $\mathcal{R}$ is the singleton set containing the rule
\begin{center}
\AXC{$K\fCenter Z$}
\AXC{$H \fCenter Z$}
\LL{$R_1$}
\BIC{$\hat{\Diamond}K\ \hat{\circ}\ H \fCenter N$}
\DP
\end{center}

\noindent Let us  derive  $\Box(\Diamond p \circ p) \circ \Diamond p \fCenter p$ from the rule just obtained (display rules are omitted):

\begin{center}
\begin{tabular}{rc@{}l}
\AXC{$p\fCenter p$}
\AXC{$\left[\pi \right]^\mathcal{R}_{\{N\}}$}
\LL{$R_0$}
\BIC{$\Box(\Diamond p \circ p)\ \hat{\circ}\ \hat{\Diamond} p \fCenter p$}
\UIC{$\Box(\Diamond p \circ p)\ \hat{\circ}\ \Diamond p \fCenter p$}
\UIC{$\Box(\Diamond p \circ p) \circ \Diamond p \fCenter p$}
\DP
&  
\ \ \ \ \ \ \ \ \ \ \ \ 
where $\pi$ is: 
& 
\AXC{$p\fCenter p$}
\AXC{$p \fCenter p$}
\LL{$R_1$}
\BIC{$\hat{\Diamond}p\ \hat{\circ}\ p \fCenter N$}
\UIC{$\Diamond p\ \hat{\circ}\ p \fCenter N$}
\UIC{$\Diamond p \circ p \fCenter N$}
\UIC{$\Box(\Diamond p \circ p) \fCenter \check{\Box}N$}
\DP 
\\
\end{tabular}
\end{center}
\end{example}

\begin{example}\label{ex:2}Consider now the definite inductive but not analytic inductive axiom $p \fCenter \Diamond\Box\Diamond\Box\Diamond p$. We run ALBA on it until we reach a polarity-safe form.

\begin{center}
\begin{tabular}{@{}clr@{}}
& ALBA run computing the inception rule for $p \fCenter \Diamond\Box\Diamond\Box\Diamond p$: \\
\hline
 & $p \fCenter \Diamond\Box\Diamond\Box\Diamond p$ & \ \\
iff & $ \forall p \forall \nomj \forall \cnomm (\rnomj \leq p\ \&\ \Diamond\Box\Diamond\Box\Diamond p \leq \rcnomm \Rightarrow \bnomj \leq \bcnomm)$ &  \\
iff & $ \forall \nomj \forall \cnomm (\Diamond\Box\Diamond\Box\Diamond \rnomj \leq \rcnomm \Rightarrow \bnomj \leq \bcnomm)$ &  \\
iff & $ \forall \nomj \forall \cnomm ( \forall \nomi (\nomi \leq \Box\Diamond\Box\Diamond \rnomj \Rightarrow \Diamond \nomi \leq \rcnomm) \Rightarrow \bnomj \leq \bcnomm)$ &  \\
iff & $ \forall \nomj \forall \cnomm ( \forall \nomi ( \forall \cnomn (\Diamond\Box\Diamond \rnomj \leq \cnomn \Rightarrow \nomi \leq \Box \cnomn) \Rightarrow \Diamond \nomi \leq \rcnomm) \Rightarrow \bnomj \leq \bcnomm)$ &  \\
iff & $ \forall \nomj \forall \cnomm ( \forall \nomi ( \forall \cnomn (\forall \nomk (\nomk \leq \Box\Diamond\rnomj \Rightarrow\Diamond\nomk \leq \cnomn) \Rightarrow \nomi \leq \Box \cnomn) \Rightarrow \Diamond \nomi \leq \rcnomm) \Rightarrow \bnomj \leq \bcnomm)$ &  \\
iff & $ \forall \nomj \forall \cnomm ( \forall \nomi ( \forall \cnomn (\forall \nomk ( \forall \cnomo(\Diamond\rnomj \leq \cnomo \Rightarrow \nomk \leq \Box\cnomo) \Rightarrow\Diamond\nomk \leq \cnomn) \Rightarrow \nomi \leq \Box \cnomn) \Rightarrow \Diamond \nomi \leq \rcnomm) \Rightarrow \bnomj \leq \bcnomm)$ &  \\
\end{tabular}
\end{center}

The last line of the derivation above yields both the first-order correspondent of $p \fCenter \Diamond\Box\Diamond\Box\Diamond p$, and the $0$-ary inception rule $R$ of depth $2$. Notice how the nominal and conominal variables get translated as structure metavariable (when their polarity is positive) or as parameters in a contract (when their polarity is negative).

\begin{center}
\begin{tabular}{rcl}
\AXC{$\left[\hat{\Diamond} I \fCenter M\right]^{\{R'\}}_{\{I\}}$}
\LL{$R$}
\UIC{$J \fCenter M$}
\DP
\  &  \ 
\AXC{$\left[\hat{\Diamond} K \fCenter N\right]^{\{R''\}}_{\{K\}}$}
\LL{$R'$}
\UIC{$I \fCenter \check{\Box}N$}
\DP
\  &  \ 
\AX$\hat{\Diamond} J \fCenter O$
\LL{$R''$}
\UI$K \fCenter \check{\Box}O$
\DP
\\

 & \\
\end{tabular}
\end{center}

Let us show how  $p \fCenter \Diamond\Box\Diamond\Box\Diamond p$ can be derived from the rule just obtained:

\begin{center}
\begin{tabular}{cr@{}cr@{}c}
\AXC{$\left[\pi_1\right]^{\{R'\}}_{\{I\}}$}
\LL{$R$}
\UIC{$\underline{p} \fCenter \Diamond\Box\Diamond\Box\Diamond p$}
\DP
 & $\pi_1:$ & 
\AXC{$\left[ \pi_2\right]^{\{R''\}}_{\{K\}}$}
\LL{$R'$}
\UIC{$I \fCenter \check{\Box}\Diamond\Box\Diamond p$}
\UIC{$I \fCenter \Box\Diamond\Box\Diamond p$}
\UIC{$\hat{\Diamond}I \fCenter \Diamond\Box\Diamond\Box\Diamond p$}
\DP
 & $\ \pi_2:$ & 
\AX$\underline{p} \fCenter p$
\UI$\hat{\Diamond} \underline{p} \fCenter \Diamond p$
\LL{$R''$}
\UI$K \fCenter \check{\Box}\Diamond p$
\UI$K \fCenter \Box\Diamond p$
\UI$\hat{\Diamond}K \fCenter \Diamond\Box\Diamond p$
\DP
\\
\end{tabular}
\end{center}
\end{example}

\begin{example}\label{ex:3}Consider now the definite inductive but not analytic inductive axiom $\Diamond p \star p \fCenter p$, where $\star \in \mathcal{G}$ is such that $\varepsilon_{1,\star} = \varepsilon_{2,\star} = 1$. We run ALBA on it until we reach a polarity-safe form.

\begin{center}
\begin{tabular}{@{}clr@{}}
& ALBA run computing the inception rule for $\Diamond p \star p \fCenter p$: \\
\hline
    \ & $\Diamond p \star p \fCenter p$ & \ \\
iff \ & $ \forall p \forall \nomj \forall \cnomm (\nomj \leq \Diamond p \star p\ \&\  p \leq \cnomm \Rightarrow \nomj \leq \cnomm)$ &  \\
iff \ & $ \forall \nomj \forall \cnomm ( \nomj \leq \Diamond\cnomm\star\cnomm \Rightarrow \nomj \leq \cnomm)$ &  \\
iff \ & $ \forall \nomj \forall \cnomm ( \forall \cnomn_1\forall\cnomn_2(\Diamond \cnomm \leq \cnomn_1\ \&\ \cnomm \leq \cnomn_2 \Rightarrow \nomj \leq \cnomn_1\star\cnomn_2) \Rightarrow \nomj \leq \cnomm)$ &  \\
iff \ & $ \forall \nomj \forall \cnomm ( \forall \cnomn_1\forall\cnomn_2(\forall\nomk(\nomk\leq\cnomm\Rightarrow\Diamond\nomk\leq\cnomn_1))\ \&\ \forall\nomh(\nomh\leq\cnomm\Rightarrow\nomh\leq\cnomn_2) \Rightarrow \nomj \leq \cnomn_1\star\cnomn_2) \Rightarrow \nomj \leq \cnomm)$ & 
\end{tabular}
\end{center}

The last line of the derivation above yields both the first-order correspondent of $\Diamond p \star p \fCenter p$, and the $0$-ary inception rule $R$ of depth $1$.

\begin{center}
\begin{tabular}{rcl}

\AXC{$\left[ J \fCenter N_1 \star N_2\right]^{\{R_1,R_2\}}_{\{N_1,N_2\}}$}
\LL{$R$}
\UIC{$J \fCenter M$}
\DP
\  &  \ 
\AXC{$ K \fCenter M$}
\LL{$R_1$}
\UIC{$\hat{\Diamond} K \fCenter N_1$}
\DP
\  &  \ 
\AXC{$ H \fCenter M$}
\LL{$R_2$}
\UIC{$H \fCenter N_2$}
\DP
\\

 & \\
\end{tabular}
\end{center}

Let us show how $\Diamond p \star p \fCenter p$ can be derived from the rule just obtained.

\begin{center}
\begin{tabular}{cr@{}c}
\AXC{$\left[\pi_1\right]^{\{R_1,R_2\}}_{\{N_1,N_2\}}$}
\LL{$R$}
\UIC{$\Diamond p \star p \fCenter p$}
\DP
 &  $\ \pi_1:$ &  
\AXC{$p \fCenter p$}
\LL{$R_1$}
\UIC{$\hat{\Diamond} p \fCenter N_1$}
\UIC{$\Diamond p \fCenter N_1$}
\AXC{$p \fCenter p$}
\LL{$R_2$}
\UIC{$p \fCenter N_2$}
\BIC{$\Diamond p \star p \fCenter N_1 \ \check{\star}\ N_2$}
\DP
\end{tabular}
\end{center}
\end{example}

\begin{example}\label{ex:4}Consider now the inductive but not analytic inductive axiom 
\[\Diamond(\Box{\lhd}(q\circ r) \wedge \Box(p\star\Box q))\leq{\lhd}\Box(p\wedge r) \vee\Diamond p,\] where $\lhd \in \mathcal{F}$ such that $\varepsilon_\lhd(1) = \partial$ and its Galois-adjoint is denoted as $\blacktriangleleft$. We denote as $/_\star$ and $\setminus_\star$ the residual of $\star$ on the first and second coordinate, respectively. We run ALBA on it until we reach a polarity-safe form.

\begin{center}
\begin{tabular}{@{}clr@{}}
& ALBA run computing the inception rule for $\Diamond(\Box{\lhd}(q\circ r) \wedge \Box(p\star\Box q))\leq\lhd\Box(p\wedge r) \vee\Diamond p$: \\
\hline
    \ & $\Diamond(\Box{\lhd}(q\circ r) \wedge \Box(p\star\Box q))\leq\lhd\Box(p\wedge r) \vee\Diamond p$ & \ \\
iff \ & $ \forall q \forall r\forall p \forall \nomj \forall \cnomm(\nomj \leq \Box{\lhd}(q\circ r)\ \&\ \nomj \leq \Box(p\star\Box q)\ \&\ {\lhd}\Box(p\wedge r)\leq \cnomm\ \&\ \Diamond p \leq \cnomm \Rightarrow \Diamond\nomj \leq \cnomm)$ &  \\
iff \ & $ \forall q \forall r\forall \nomj \forall \cnomm(\nomj \leq \Box{\lhd}(q\circ r)\ \&\ \nomj \leq \Box(\blacksquare\cnomm \star\Box q)\ \&\ {\lhd}\Box(\blacksquare \cnomm\wedge r)\leq \cnomm\Rightarrow \Diamond\nomj \leq \cnomm)$ &  \\
iff \ & $ \forall q\forall \nomj \forall \cnomm(\nomj \leq \Box{\lhd}(q\circ \Diamondblack{\blacktriangleleft} \cnomm)\ \&\ \nomj \leq \Box(\blacksquare\cnomm \star\Box q)\ \&\ \Diamondblack{\blacktriangleleft} \cnomm\leq \blacksquare\cnomm\Rightarrow \Diamond\nomj \leq \cnomm)$ &  \\
iff \ & $ \forall \nomj \forall \cnomm(\nomj \leq \Box{\lhd}(\Diamondblack(\blacksquare\cnomm/_\star\Diamondblack\nomj)\circ \Diamondblack{\blacktriangleleft} \cnomm)\ \&\ \Diamondblack{\blacktriangleleft} \cnomm\leq \blacksquare\cnomm\Rightarrow \Diamond\nomj \leq \cnomm)$ & \\
& & \\
 \ & $ \nomj \leq \Box{\lhd}(\Diamondblack(\blacksquare\cnomm/_\star\Diamondblack\nomj)\circ \Diamondblack{\blacktriangleleft} \cnomm)$ & \\
 iff \ & $ \forall \cnomn(\lhd(\Diamondblack(\blacksquare\cnomm/_\star\Diamondblack\nomj)\circ \Diamondblack{\blacktriangleleft} \cnomm)\leq\cnomn \Rightarrow \nomj \leq \Box\cnomn)$ & \\
  iff \ & $ \forall \cnomn(\forall \cnoml(\Diamondblack(\blacksquare\cnomm/_\star\Diamondblack\nomj)\circ \Diamondblack{\blacktriangleleft} \cnomm \leq \cnoml \Rightarrow \lhd \noml \leq \cnomn) \Rightarrow \nomj \leq \Box\cnomn)$ & \\
\end{tabular}
\end{center}

The derivation above yields both the first-order correspondent of 
\[\Diamond(\Box{\lhd}(q\circ r) \wedge \Box(p\star\Box q))\leq\lhd\Box(p\wedge r) \vee\Diamond p,\] 
\noindent and the following $1$-ary inception rule $R$ of depth $1$.

\begin{center}
\begin{tabular}{rl}
\AXC{$\hat{\Diamondblack}\hat{\blacktriangleleft} M \fCenter \check{\blacksquare}M$}
\AXC{$\left[\nomj \fCenter \check{\Box}N\right]^{\{R'\}}_{\{N\}}$}
\LL{$R$}
\BIC{$\hat{\Diamond}J \fCenter M$}
\DP
\  &  \ 
\AXC{$\hat{\Diamondblack}(\check{\blacksquare}M\hat{/}_{\star}\hat{\Diamondblack}J)\ \hat{\circ}\ \hat{\Diamondblack}\hat{\blacktriangleleft}M \fCenter L$}
\LL{$R'$}
\UIC{$\hat{\lhd} L \fCenter N$}
\DP
\\
\end{tabular}
\end{center}

Let us show how  $\Diamond(\Box{\lhd}(q\circ r) \wedge \Box(p\star\Box q))\leq\lhd\Box(p\wedge r) \vee\Diamond p$ can be derived from the rule just obtained:

\begin{center}
\AXC{$p\fCenter p$}
\UIC{$\hat{\Diamond}p\fCenter \Diamond p$}
\UIC{$\hat{\Diamond}p\fCenter {\lhd}\Box(p\wedge r) \vee \Diamond p$}
\UIC{$p\fCenter \check{\blacksquare}(\lhd\Box(p\wedge r) \vee \Diamond p)$}
\UIC{$p\wedge r\fCenter \check{\blacksquare}(\lhd\Box(p\wedge r) \vee \Diamond p)$}
\UIC{$\Box(p\wedge r)\fCenter \check{\Box}\check{\blacksquare}(\lhd\Box(p\wedge r) \vee \Diamond p)$}
\UIC{$\hat{{\lhd}}\check{\Box}\check{\blacksquare}({\lhd}\Box(p\wedge r) \vee \Diamond p)\fCenter{\lhd}\Box(p\wedge r)$}
\UIC{$\hat{{\lhd}}\check{\Box}\check{\blacksquare}({\lhd}\Box(p\wedge r) \vee \Diamond p)\fCenter{\lhd}\Box(p\wedge r)\vee\Diamond p$}
\doubleLine
\UIC{$\hat{\Diamondblack}\hat{\blacktriangleleft}({\lhd}\Box(p\wedge r) \vee \Diamond p)\fCenter\check{\blacksquare}({\lhd}\Box(p\wedge r)\vee\Diamond p)$}
\AXC{$\left[\pi\right]^{\{R'\}}_{\{N\}}$}
\LL{$R$}
\BIC{$\hat{\Diamond}(\Box{\lhd}(q\circ r) \wedge \Box(p\star\Box q))\fCenter{\lhd}\Box(p\wedge r) \vee\Diamond p$}
\UIC{$\Diamond(\Box{\lhd}(q\circ r) \wedge \Box(p\star\Box q))\fCenter{\lhd}\Box(p\wedge r) \vee\Diamond p$}
\DP
\end{center}
where $\pi$ is:
\begin{center}
{\small
\AXC{$p \fCenter p$}
\UIC{$\hat{\Diamond}p \fCenter \Diamond p$}
\UIC{$\hat{\Diamond}p \fCenter{\lhd}\Box(p\wedge r) \vee\Diamond p$}
\UIC{$p \fCenter\check{\blacksquare}({\lhd}\Box(p\wedge r) \vee\Diamond p)$}

\AXC{$q \fCenter q$}
\UIC{$\Box q \fCenter \check{\Box} q$}
\BIC{$p\star\Box q\fCenter \check{\blacksquare}({\lhd}\Box(p\wedge r) \vee\Diamond p)\ \check{\star}\ \check{\Box}q$}
\UIC{$\Box(p\star\Box q)\fCenter \check{\Box}(\check{\blacksquare}({\lhd}\Box(p\wedge r) \vee\Diamond p)\ \check{\star}\ \check{\Box}q)$}
\UIC{$\Box{\lhd}(q\circ r) \wedge \Box(p\star\Box q)\fCenter \check{\Box}(\check{\blacksquare}({\lhd}\Box(p\wedge r) \vee\Diamond p)\ \check{\star}\ \check{\Box}q)$}
\UIC{$\hat{\Diamondblack}\Diamond(\Box{\lhd}(q\circ r) \wedge \Box(p\star\Box q))\fCenter \check{\blacksquare}({\lhd}\Box(p\wedge r) \vee\Diamond p)\ \check{\star}\ \check{\Box}q$}
\doubleLine
\UIC{$\hat{\Diamondblack}(\check{\blacksquare}({\lhd}\Box(p\wedge r) \vee\Diamond p)\hat{/}_{\star}\hat{\Diamondblack}\Diamond(\Box{\lhd}(q\circ r) \wedge \Box(p\star\Box q)))\fCenter q$}

\AXC{$r\fCenter r$}
\UIC{$p \wedge r\fCenter r$}
\UIC{$\Box(p \wedge r)\fCenter \check{\Box} r$}
\UIC{$\hat{{\lhd}}\check{\Box} r\fCenter {\lhd}\Box(p \wedge r)$}
\UIC{$\hat{{\lhd}}\check{\Box} r\fCenter {\lhd}\Box(p \wedge r) \vee \Diamond p$}
\doubleLine
\UIC{$\hat{\Diamondblack}\hat{\blacktriangleleft}({\lhd}\Box(p \wedge r) \vee \Diamond p)\fCenter r$}
\BIC{$\hat{\Diamondblack}(\check{\blacksquare} ({\lhd}\Box(p\wedge r) \vee\Diamond p) \hat{/}_{\star}\hat{\Diamondblack}(\Box{\lhd}(q\circ r) \wedge \Box(p\star\Box q)))\ \hat{\circ}\ \hat{\Diamondblack}\hat{\blacktriangleleft} ({\lhd}\Box(p\wedge r) \vee\Diamond p) \fCenter q\circ r$}
\LL{$R'$}
\UIC{$\hat{{\lhd}}(q\circ r) \fCenter N$}
\UIC{${\lhd}(q\circ r) \fCenter N$}
\UIC{${\lhd}(q\circ r)\wedge\Box(p\star\Box q) \fCenter N$}
\UIC{$\Box{\lhd}(q\circ r)\wedge\Box(p\star\Box q) \fCenter\check{\Box}N$}
\DP
}
\end{center}

In the derivation above, double lines denote two (or more) consecutive application of a residuation postulate.
\end{example}

\begin{example} \label{ex:5}Finally, consider the definite inductive but not analytic inductive axiom $\Diamond p \fCenter \Diamond\Box\Diamond\Box\Diamond\Box\Diamond p$. We run ALBA on it until we reach a polarity-safe form.
 
\begin{center}
\begin{tabular}{@{}clr@{}}
& ALBA run computing the inception rule for $\Diamond p \fCenter \Diamond\Box\Diamond\Box\Diamond\Box\Diamond p$: \\
\hline
    \ & $\Diamond p \fCenter \Diamond\Box\Diamond\Box\Diamond\Box\Diamond p$ & \ \\
iff \ & $\forall p\forall \nomj \forall \cnomm (\nomj \leq p\ \&\ \Diamond\Box\Diamond\Box\Diamond\Box\Diamond p \leq \cnomm \Rightarrow \Diamond\nomj \leq \cnomm)$ &  \\
iff \ & $\forall \nomj \forall \cnomm (\Diamond\Box\Diamond\Box\Diamond\Box\Diamond \nomj \leq \cnomm \Rightarrow \Diamond\nomj \leq \cnomm)$ &  \\
iff \ & $\forall \nomj \forall \cnomm (\forall \nomi(\nomi \leq \Box\Diamond\Box\Diamond\Box\Diamond \nomj \Rightarrow \Diamond\nomi \leq \cnomm) \Rightarrow \Diamond\nomj \leq \cnomm)$ &  \\
iff \ & $\forall \nomj \forall \cnomm (\forall \nomi( \forall\cnomn(\Diamond\Box\Diamond\Box\Diamond \nomj \leq \cnomn \Rightarrow \nomi \leq \Box\cnomn) \Rightarrow \Diamond\nomi \leq \cnomm) \Rightarrow \Diamond\nomj \leq \cnomm)$ &  \\
iff \ & $\forall \nomj \forall \cnomm (\forall \nomi( \forall\cnomn( \forall \nomk( \nomk \leq \Box\Diamond\Box\Diamond \nomj \Rightarrow \Diamond \nomk \leq \cnomn) \Rightarrow \nomi \leq \Box\cnomn) \Rightarrow \Diamond\nomi \leq \cnomm) \Rightarrow \Diamond\nomj \leq \cnomm)$ &  \\
iff \ & $\forall \nomj \forall \cnomm (\forall \nomi( \forall\cnomn( \forall \nomk( \forall \cnoml(\Diamond\Box\Diamond \nomj \leq \cnoml \Rightarrow \nomk \leq \Box \cnoml)) \Rightarrow \Diamond \nomk \leq \cnomn) \Rightarrow \nomi \leq \Box\cnomn) \Rightarrow \Diamond\nomi \leq \cnomm) \Rightarrow \Diamond\nomj \leq \cnomm)$ &  \\
 &  &  \\
 \ & $ \forall \cnoml(\Diamond\Box\Diamond \nomj \leq \cnoml \Rightarrow \nomk \leq \Box \cnoml)$ &  \\
iff \ & $ \forall \cnoml( \nomh(\nomh \leq \Box\Diamond\nomj \Rightarrow\Diamond\nomh\leq\cnoml) \Rightarrow \nomk \leq \Box \cnoml)$ &  \\
iff \ & $ \forall \cnoml( \nomh( \cnomo(\Diamond\nomj\leq\cnomo\Rightarrow\nomh\leq\Box\cnomo) \Rightarrow\Diamond\nomh\leq\cnoml) \Rightarrow \nomk \leq \Box \cnoml)$ & 
\end{tabular}
\end{center}

The derivation above yields the first-order correspondent of $\Diamond p \fCenter \Diamond\Box\Diamond\Box\Diamond\Box\Diamond p$ together with its equivalent $0$-ary inception rule $R_0$ of depth $3$. 

\begin{center}
\begin{tabular}{rl}

\AXC{$\left[\hat{\Diamond} I \fCenter M\right]^{\{R_1\}}_{\{I\}}$}
\LL{$R_0$}
\UIC{$\hat{\Diamond}J \fCenter M$}
\DP
\  &  \ 
\AXC{$\left[\hat{\Diamond} K \fCenter N\right]^{\{R_2\}}_{\{K\}}$}
\LL{$R_1$}
\UIC{$I \fCenter \check{\Box}N$}
\DP
\\ \\
\AXC{$\left[\hat{\Diamond} H \fCenter L\right]^{\{R_3\}}_{\{H\}}$}
\LL{$R_2$}
\UIC{$K \fCenter \check{\Box}L$}
\DP 
\ & \ 
\AX$\hat{\Diamond} J \fCenter O$
\LL{$R_3$}
\UI$H \fCenter \check{\Box}O$
\DP\\
\end{tabular}
\end{center}

Let us show how  $\Diamond p \fCenter \Diamond\Box\Diamond\Box\Diamond\Box\Diamond p$ can be derived from the rule just obtained:

\begin{center}
\begin{tabular}{r@{}cr@{}c}
 & 
\AXC{$\left[\pi_1\right]^{\{R_1\}}_{\{I\}}$}
\LL{$R_0$}
\UIC{$\hat{\Diamond} p \fCenter \Diamond\Box\Diamond\Box\Diamond\Box\Diamond p$}
\UIC{$\Diamond p \fCenter \Diamond\Box\Diamond\Box\Diamond\Box\Diamond p$}
\DP  
 & $\pi_1:$ & 
\AXC{$\left[ \pi_2\right]^{\{R_3\}}_{\{H\}}$}
\LL{$R_1$}
\UIC{$I \fCenter \check{\Box}\Diamond\Box\Diamond\Box\Diamond p$}
\UIC{$I \fCenter \Box\Diamond\Box\Diamond\Box\Diamond p$}
\UIC{$\hat{\Diamond}I \fCenter \Diamond\Box\Diamond\Box\Diamond\Box\Diamond p$}
\DP \\ 

& & & \\
  
$\ \pi_2:$ & 
\AXC{$\left[ \pi_3\right]^{\{R_2\}}_{\{K\}}$}
\LL{$R_2$}
\UIC{$K \fCenter \check{\Box}\Diamond\Box\Diamond p$}
\UIC{$K \fCenter \Box\Diamond\Box\Diamond p$}
\UIC{$\hat{\Diamond}K \fCenter \Diamond\Box\Diamond\Box\Diamond p$}
\DP
 & $\ \pi_3:$ &  
\AX$p \fCenter p$
\UI$\hat{\Diamond} p \fCenter \Diamond p$
\LL{$R_3$}
\UI$H \fCenter \check{\Box}\Diamond p$
\UI$H \fCenter \Box\Diamond p$
\UI$\hat{\Diamond}H \fCenter \Diamond\Box\Diamond p$
\DP 
 \\
\end{tabular}
\end{center}
\end{example}

\begin{remark}\label{rem:coincidence}
    The reader may have noticed that in all the examples presented in this section, all the metavariables contained in the aim of a contract (cf.~Definition \ref{def:depthrules}) come either from the corresponding endsequent of the rule or from one of the sets $\mathcal{X}$s. This is not a coincidence, but is common to all the inception rules coming from a polarity-safe ALBA output. This property could be exploited in future works to give a syntactic characterization of the inception rules obtained via ALBA from inductive inequalities.
\end{remark}

\section{Cut elimination}\label{sec:cut_elimination}
In the present section, via a Belnap-style metatheorem which builds on \cite{frittella2014multi}, we prove that if all the (additional) structural  rules of an inception calculus are ALBA-generated (cf.~Section \ref{ssec:examples}), then the calculus is cut-eliminable. Throughout this section, $X$ and $Y$ denote structure metavariables.
\begin{definition} \label{def:cut_depth}
    The \emph{inception depth} of a derivation $\pi$, denoted as $d(\pi)$, is the supremum of the depths of the rules instantiated in its nodes (cf.~Definition \ref{def:depthrules}). 

    Consider a cut between the sequents $X\fCenter A$ and $A\fCenter Y$, and let $\pi_1$ (resp.~$\pi_2$) be the derivation with endsequent $X\fCenter A$ (resp.~$A\fCenter Y$). We define the \emph{inception depth} of the cut to be $d(\pi_1) + d(\pi_2)$. If both $\pi_1$ and $\pi_2$ do not contain instances of the cut rule (cf.~discussion below Notation \ref{notation_dream}), we say that the cut between $X \fCenter A$ and $A\fCenter Y$ is an \emph{uppermost cut}.
\end{definition}

\begin{definition} \label{def:congruent_structures}
Let $\pi$ be a derivation. Two structures in $\pi$ are \emph{locally congruent} if they instantiate the same structure metavariable in an application of a rule, including structure metavariables occurring in contracts at any depth. The \emph{congruence} relation is defined as the reflexive and transitive closure of the aforementioned relation.
\end{definition}

\begin{remark}
    In an inception display calculus, locally congruent parameters are not necessarily `close' to each other: in Example \ref{ex:2}, the antecedent of $p \fCenter \Diamond\Box\Diamond\Box\Diamond p$ is locally congruent to two other structures in $\pi_2$ (underlined in the example), but to no substructure in $\pi_1$.
\end{remark}

\begin{definition}\label{def:contract_sub}
    Let us specify how substitution for succedent parameters works in the presence of higher-order assumptions. The substitution for precedent parameters is defined analogously. If the application $R$ of a rule is of the form

\begin{center}
    \AXC{$S_1[A]^{suc}_1\ \ \cdots\ \ S_n[A]^{suc}_n\ \ \ \ \left[ T_{n+1}\right]^{\mathcal{R}_1}_{\mathcal{X}_1}[A]^{suc}_{n+1}\ \ \cdots\ \ \left[ T_{n+m}\right]^{\mathcal{R}_m}_{\mathcal{X}_m}[A]^{suc}_{n+m}$}
    \RL{,}
    \UIC{$S_0[A]^{suc}$}
    \DP
\end{center}

\noindent where $S_i$ and $T_i$ are metavariables for arbitrary sequents, and $[A]^{suc}_i$ represents all and only the occurrences of $A$ in $S_i$ (when $1\leq i\leq n$) or in $T_i$ (when $n<i\leq n+m$) which are congruent to an occurrence of $A$ in the conclusion, which may or may not be present in $S_0$ (cf.~Example \ref{ex:sub}). Then $R[Y/A]^{suc}$ is recursively defined as

\begin{center}
    \AXC{$S_1[Y/A]^{suc}_1\ \ \cdots\ \ S_n[Y/A]^{suc}_n\ \ \ \ \left[ T_{n+1}\right]^{\mathcal{R}_1}_{\mathcal{X}_1}[Y/A]^{suc}_{n+1}\ \ \cdots\ \ \left[ T_{n+m}\right]^{\mathcal{R}_m}_{\mathcal{X}_m}[Y/A]^{suc}_{n+m}$}
    \RL{,}
    \UIC{$S_0[Y/A]^{suc}$}
    \DP
\end{center}

\noindent where we let $\left[ T_{n+j} \right]^{\mathcal{R}_j}_{\mathcal{X}_j}[Y/A]^{suc}_{n+j}$ be $\left[ T_{n+j} [Y/A]^{suc}_{n+j}\ \right]^{\mathcal{\overline{R}}_j}_{\mathcal{X}_j}$, with $\overline{\mathcal{R}}_j = \{R'[Y/A]^{suc}\ |\ R'\in\mathcal{R}_j\}$.
\end{definition}

\begin{example} \label{ex:sub}
    Consider the application of the rule $R$ in the derivation of $p \fCenter \Diamond\Box\Diamond\Box\Diamond p$ in Example \ref{ex:2}. Following Definition \ref{def:contract_sub}, the substitution $R[\hat{\Diamondblack}p/p]^{pre}$ is:

\begin{center}
\begin{tabular}{rcl}

\!\!\!\!\!\!
\AXC{$\left[I \fCenter \check{\Box}\Diamond\Box\Diamond p\right]^{\overline{\{R'\}}}_{\{I\}}$}
\LL{$R[\hat{\Diamondblack} p/p]$}
\UIC{$\hat{\Diamondblack}p \fCenter \Diamond\Box\Diamond\Box\Diamond p$}
\DP
\  &  \!\!\!\!\!\!\!\!\!
\AXC{$\left[K \fCenter \check{\Box}\Diamond p\right]^{\overline{\{R''\}}}_{\{K\}}$}
\LL{$R'[\hat{\Diamondblack} p/p]$}
\UIC{$I \fCenter \check{\Box}\Diamond\Box\Diamond p$}
\DP
\  &  \!\!\!\!\!\!\!\!\!
\AX$\hat{\Diamond} \hat{\Diamondblack} p \fCenter \Diamond p$
\LL{$R''[\hat{\Diamondblack} p/p]$}
\UI$K \fCenter \check{\Box}\Diamond p$
\DP
\\

 & \\
\end{tabular}
\end{center}
    
\end{example}

\begin{remark} \label{rem:old_C6_C7}
Augmenting a proper display calculus with an inception rule could result in a calculus which is no longer properly displayable, because inception rules are not closed under simultaneous substitution of structural variables. Hence, conditions C$_6$ and C$_7$ defined in Appendix \ref{ap:display} fail in general. However, it will be shown that analytic inception rules satisfy weaker versions of C$_6$ and C$_7$, which make it still possible to prove a Belnap-style cut elimination. Indeed, consider the rule instance
\begin{center}
\begin{tabular}{r l}
\AXC{$p\fCenter p$}
\AXC{$\left[\Box(\Diamond p \circ p) \fCenter \check{\Box}N \right]^{\{R_1\}}_{\{N\}}$}
\LL{$R_0$}
\BIC{$\Box(\Diamond p \circ p)\ \hat{\circ}\ \hat{\Diamond} p \fCenter p$}
\DP \ \ \ 
&
\ \ \ \AXC{$p\fCenter p$}
\AXC{$p \fCenter p$}
\RL{$R_1$}
\BIC{$\hat{\Diamond}p\ \hat{\circ}\ p \fCenter N$}
\DP
\end{tabular}
\end{center}
from Example \ref{ex:1}, where the contract of $R_0$ is fulfilled by the following derivation $\pi$. 
\begin{center}
    \AXC{$p\fCenter p$}
\AXC{$p \fCenter p$}
\LL{$R_1$}
\BIC{$\hat{\Diamond}p\ \hat{\circ}\ p \fCenter N$}
\UIC{$\Diamond p\ \hat{\circ}\ p \fCenter N$}
\UIC{$\Diamond p \circ p \fCenter N$}
\UIC{$\Box(\Diamond p \circ p) \fCenter \check{\Box}N$}
\DP 
\end{center}
If we try to perform the substitution $[\hat{\Diamond}p/\Box(\Diamond p\circ p)]$ we get
\begin{center}
\AXC{$p\fCenter p$}
\AXC{$\left[\hat{\Diamond} p \fCenter \check{\Box}N \right]^\mathcal{R}_{\{N\}}$}
\LL{$R_0$}
\BIC{$\hat{\Diamond}p\ \hat{\circ}\ \hat{\Diamond} p \fCenter p$}
\DP
\end{center}
but now $\pi[\hat{\Diamond}p/\Box(\Diamond p\circ p)]$ is not even a legitimate derivation\footnote{More precisely, $\pi[\hat{\Diamond}p/\Box(\Diamond p\circ p)]$ is the derivation $\pi$ where all the instances of $\Box(\Diamond p\circ p)$ that are congruent (cf.~Definition \ref{def:congruent_structures}) with the instance of $\Box(\Diamond p\circ p)$ in the endsequent are substituted with $\hat\wdia p$.} anymore, which means that the result of the substitution is not an instance of the rule, and we need to look for a different witness (cf.~Notation \ref{notation_dream}).

\end{remark}

\begin{definition}
\label{def:ProperLabelledCalculi}
An inception display calculus is  \emph{proper} if it satisfies conditions C$_1$-C$_8$ (cf.~Appendix \ref{ap:display}), where C$_6$ and C$_7$ are modified as follows:

\noindent \textbf{C$_6$ (resp.~C$_7$): Quasi-closure under substitution for succedent (resp.~precedent) parts.\;} Each rule without higher-order assumptions is closed under simultaneous substitution of arbitrary structures for congruent formulas occurring in succedent (resp.~precedent) position.
Furthermore, if $A \fCenter Y$ (resp.~$Y \fCenter A$) is derivable and $\left[ \Pi\fCenter\Sigma\right]^\mathcal{R}_\mathcal{X}$ is a contract that holds by virtue of a cut-free derivation $\pi$, then the contract $(\left[ \Pi\fCenter\Sigma\right]^\mathcal{R}_\mathcal{X})[Y/A]^{suc}$ (resp.~$(\left[ \Pi\fCenter\Sigma\right]^\mathcal{R}_\mathcal{X})[Y/A]^{pre}$) can be fulfilled by a derivation $\pi'$ such that every instance of a cut rule in $\pi'$ has premises $A \fCenter Y$ (resp.~$Y \fCenter A$) and a sequent which is display-equivalent to some sequent from $\pi[Y/A]^{suc}$ (resp.~$\pi[Y/A]^{pre}$).

\end{definition}
In what follows we will omit the superscripts indicating the (preceding or succedent) position of parameters when the argument does not depend on it.
\begin{lemma} \label{lem:C6}
Let $R$ be an  inception rule generated by the translation of a polarity-safe ALBA output of a (definite) inductive axiom. For any application  $R[A]$ of $R$ in which $A$ occurs parametrically (also) in the endsequent of $R[A]$, any contract-rule in $R[Y/A]$ (cf.~Definition \ref{def:contract_sub})   
is of the form

\begin{center}
    \AXC{$S_1[Y/A]\ \ \cdots\ \ S_n[Y/A]\ \ \ \ \left[ T_{n+1}\right]^{\mathcal{R}_1}_{\mathcal{X}_1}[Y/A]\ \ \cdots\ \ \left[ T_{n+m}\right]^{\mathcal{R}_m}_{\mathcal{X}_m}[Y/A]$}
    \RL{,}
    \UIC{$S_0$}
    \DP
\end{center}

\noindent that is, no substitution takes place in the endsequent of any contract-rule of $R[Y/A]$ at any depth.

\end{lemma}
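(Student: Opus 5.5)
The plan is to read the statement off the shape of the polarity-safe ALBA output described in the subsection on generating inception rules, by a careful bookkeeping of which (co)nominals, and hence which structure metavariables, can occur where. By Definition \ref{def:contract_sub}, $R[Y/A]$ substitutes only at occurrences congruent to the occurrence of $A$ in the endsequent $s(\varphi)\fCenter s(\psi)$. At the level of the rule schema, these are occurrences of the metavariables $s(\nomj)$, $s(\cnomm)$ with $\nomj\in\ol{\nomj}$, $\cnomm\in\ol{\cnomm}$ that $A$ instantiates. It therefore suffices to show that no metavariable of the form $s(\nomj)$ or $s(\cnomm)$ with $\nomj\in\ol{\nomj}$, $\cnomm\in\ol{\cnomm}$ occurs in the conclusion $s(\gamma_{i,k})\fCenter s(\delta_{i,k})$ of any contract-rule $R_{i,k}\in\mathcal{R}_i$, at any depth $1\le i\le n$.

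First, I would record the variable discipline stated for the output \eqref{eq:ALBA_output}. Every (co)nominal in $\gamma_{i,k}\leq\delta_{i,k}$ lies in $\ol{\nomh_{i,k}}\cup\ol{\cnomn_{i,k}}$ or in $\ol{\nomj_i}\cup\ol{\cnomm_i}$. All of these are bound by quantifiers strictly inside $\Gamma_i$, so they are distinct from the outermost extractors $\ol{\nomj},\ol{\cnomm}$. This holds because every (co)nominal is introduced by a fresh approximation step (\cite[Section 4]{conradie2019algorithmic}), and we assume that the structure variables parametrising contracts are all different. The outermost variables $\ol{\nomj},\ol{\cnomm}$ can occur inside the nested quasi-inequalities only in the side conditions $\ol{\alpha_{i,k}\leq\beta_{i,k}}$. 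Examples \ref{ex:2} and \ref{ex:5} illustrate this: $J$ occurs only in the premise of $R''$, resp.\ $R_3$. Under the translation $s$, this means the outer metavariables appear in contract-rules only in their premises $s(\alpha_{i,k})\fCenter s(\beta_{i,k})$.

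Second, I would argue by induction on $i$, from $1$ up to $n$, that the substitution $[Y/A]$ propagated recursively through $\overline{\mathcal{R}}_j=\{R'[Y/A]\mid R'\in\mathcal{R}_j\}$ never reaches a conclusion. At depth $i$ the conclusion contains only metavariables $s(\nomh_{i,k})$, $s(\cnomn_{i,k})$ and metavariables from $\mathcal{X}_i=\{\ol{s(\nomj_i)},\ol{s(\cnomm_i)}\}$. The latter are universally quantified and never instantiated, so they cannot be congruent to the parametric occurrence of $A$. The former are local fresh metavariables of $R_{i,k}$ and differ from every metavariable instantiated by $A$. Hence $S_0[Y/A]=S_0$. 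Meanwhile substitution may still occur in the premises $S_j$ and in the aims $T_{n+j}$ of nested contracts: these are the $s(\varphi_{i+1})\fCenter s(\psi_{i+1})$, whose free (co)nominals are bound by $\ol{\nomh_{i,k}},\ol{\cnomn_{i,k}}$ or lie in $\ol{\nomj},\ol{\cnomm}$. We then pass to depth $i+1$ and repeat the argument. Remark \ref{rem:coincidence} gives the same information from the other side: metavariables in an aim come from the endsequent or from the $\mathcal{X}$s, not from conclusions of contract-rules.

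The main obstacle I expect is the transitive closure in the congruence relation of Definition \ref{def:congruent_structures}. Inside a witness, a local metavariable such as $s(\nomh_{i,k})$ may be instantiated by a structure that is congruent, through a chain of applications, to $A$; Example \ref{ex:2} shows that congruence can link distant structures. To handle this, I would separate two levels. The contract-rule as a schema in $R[Y/A]$ is defined purely syntactically, by substitution on the outer metavariables. Congruence in witnesses only affects how a witness $\pi'$ for the substituted contract is built, which is the content of the modified C$_6$/C$_7$, not of this lemma. With this separation, the schema-level argument above suffices.
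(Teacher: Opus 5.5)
Your argument is correct, and its overall structure matches the paper's. Both reduce the lemma to one claim: the metavariables instantiated by $A$, namely the extractor metavariables of the endsequent, never occur in the conclusion of a contract-rule at any depth. Both also read $R[Y/A]$ at the level of the rule schema; the paper does this implicitly, and your explicit separation of schema from witness is a useful clarification.

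The difference is how that claim is justified. The paper argues by polarity (Remark \ref{footn:polarity-safe}). In a polarity-safe output, the extractors on the left of the main quasi-inequality occur only in inequalities in negative position, i.e.\ nested in the antecedents of an odd number of implications. The translation turns such inequalities only into rule premises or contract aims, whereas contract-rule conclusions $\gamma_{i,k}\leq\delta_{i,k}$ are nested in an even number of antecedents. You instead use the scoping clause of the general shape of the output: $\gamma_{i,k}\leq\delta_{i,k}$ contains only (co)nominals bound inside $\Gamma_i$, namely its own fresh variables and the never-instantiated contract parameters.

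Your version is uniform across depths and slightly stronger. It excludes from contract-rule conclusions every instantiated metavariable of any enclosing scope, which is exactly what the recursive clause $\overline{\mathcal{R}}_j=\{R'[Y/A]\mid R'\in\mathcal{R}_j\}$ of Definition \ref{def:contract_sub} needs. The paper's version, however, shows where the work is actually done. The scoping clause you rely on is asserted, not proved, in the paper's description of the output, and it holds only because of polarity safety. In Example \ref{ex:1}, at line (4) the extractor $\cnomm$ still sits in the positive-position inequality $\Diamond\cnomm\circ\cnomm\leq\cnomn$, i.e.\ in a would-be contract-rule conclusion. Only the polarity-adjusting step to line (5) moves it into the premises $\nomk\leq\cnomm$ and $\nomh\leq\cnomm$. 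Your proof never invokes polarity safety, so it takes this step for granted without saying so.

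A minor point: your remark that outer variables occur inside the nested quasi-inequalities only in the side conditions $\overline{\alpha_{i,k}\leq\beta_{i,k}}$ is too strong. They also occur in aims, e.g.\ $X$ in $[X\vdash\check{\Box}N]$ in Example \ref{ex:1} and $M$ in $[\hat{\Diamond}I\vdash M]$ in Example \ref{ex:2}. Nothing in your argument depends on this, and you do allow substitution in aims later.
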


\begin{proof}
Recall (cf.~Remark \ref{footn:polarity-safe}) that we refer to the nominal and conominal variables introduced with first approximation in ALBA as the extractors. As observed in Remark \ref{footn:polarity-safe}, after reaching the polarity-safe shape (cf.~Definition \ref{def:polarity_safe}), the extractors occurring on the left-hand side of the main quasi-inequality  always occur within inequalities in negative position.
This implies that, after obtaining $R$ by translating the polarity-safe ALBA output of the given inductive inequality, the metavariables of $R$ corresponding to the extractors (which are exacty the metavariables occurring in the conclusion of  $R$) cannot occur in the conclusions of any contract-rules  at any depth in $R$, since the inequalities in negative position can only be translated either as premises of rules or as aims of contracts. Hence, the substitution $[Y/A]$ cannot involve the endsequent of any  contract rule at any depth.  
\end{proof}

\begin{thm} \label{thm:inception_preserves_C6_C7}
Inception display calculi with ALBA-generated inception rules satisfy conditions C$_1$-C$_8$.
\end{thm}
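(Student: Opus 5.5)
The plan is to verify the conditions one at a time, relying on the fact that they hold for the base calculus D.LE and for analytic structural rules (cf.~\cite{ucaaptt,LEframes}). So only the new inception rules, and the modified C$_6$ and C$_7$ of Definition \ref{def:ProperLabelledCalculi}, need genuine work.

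\emph{The conditions other than C$_6$ and C$_7$.} C$_1$ (preservation of formulas), C$_2$ (shape-alikeness of parameters), C$_3$ (non-proliferation) and C$_4$ (position-alikeness) follow from the shape of the translation $s$ applied to the polarity-safe ALBA output. Every formula in an instance of an inception rule is a substitution instance of a structure metavariable. Each metavariable of the endsequent comes from an extractor, which occurs exactly once in $\varphi\leq\psi$, so non-proliferation holds in the conclusion. Precedent/succedent position is fixed by polarity-safety (Definition \ref{def:polarity_safe}): nominals occur positively and conominals negatively. Hence congruent occurrences, which now include occurrences inside contracts at any depth (Definition \ref{def:congruent_structures}), all sit on the same side. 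C$_5$ (display of principal constituents) and C$_8$ (eliminability of principal cuts) concern only the logical introduction rules, which are unchanged from D.LE, so they are inherited.

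\emph{C$_6$ and C$_7$.} The first clause, for rules without higher-order assumptions, is immediate: such rules are depth-$0$, hence analytic structural rules, and these are closed under uniform substitution (Proposition \ref{prop:type5}). For the second clause, let $\pi$ be a cut-free derivation fulfilling the contract $[\Pi\fCenter\Sigma]^{\mathcal{R}}_{\mathcal{X}}$, and suppose $A\fCenter Y$ is derivable (the precedent case is dual). I will build $\pi'$ by induction on the height of $\pi$, nested inside an induction on inception depth; the latter is well-founded by the finite-dreams assumption. The cases are as follows.
\begin{itemize}
\item For an axiom, or a base-calculus rule in which the congruent occurrence of $A$ is parametric, apply the substitution directly, using C$_6$ for D.LE.
\item Where the occurrence of $A$ becomes principal, the substitution is no longer a legitimate rule instance. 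There I keep the original subderivation ending in a sequent $\Pi'\fCenter A$, display it, and cut it against $A\fCenter Y$. Its other premise is display-equivalent to a sequent of $\pi[Y/A]$, as the definition requires.
\item For an application of a contract rule $R'\in\mathcal{R}$, use Lemma \ref{lem:C6}: the substitution never touches the endsequent of any contract rule of $R[Y/A]$ at any depth. So the substituted rule $R'[Y/A]$ has the same conclusion, and the inductive hypothesis on the premises, including nested contracts handled by the outer induction, yields witnesses for them.
\end{itemize}

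The main obstacle is exactly the phenomenon in Remark \ref{rem:old_C6_C7}. A naive substitution can destroy the witness, because an occurrence of $A$ in the aim of a contract may be introduced by a logical rule inside the dream, and it may also occur parametrically in the premises of contract rules. The delicate point is the bookkeeping showing that cuts are inserted only on $A\fCenter Y$, and only next to sequents display-equivalent to ones in $\pi[Y/A]$. This is needed especially when such cuts would have to be pushed through nested contract-rule applications. Lemma \ref{lem:C6}, together with the observation in Remark \ref{rem:coincidence} that every metavariable in an aim comes from the endsequent or from $\mathcal{X}$, is what I expect to make this go through: it means the substituted occurrences flow only downward, to the aim, and never into the fixed conclusions of contract rules.
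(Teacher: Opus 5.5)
Your treatment of C$_1$--C$_5$ and C$_8$ matches the paper's. C$_3$ comes from the single occurrence of each extractor, C$_4$ from polarity-safety, and C$_5$ and C$_8$ are inherited because inception rules are purely structural. For C$_1$, the point doing the work, which you leave implicit, is that the variables in $\mathcal{X}$ are never instantiated, while every other metavariable of an aim occurs in the endsequent (Remark~\ref{rem:coincidence}).

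For the second clause of C$_6$/C$_7$ you take a different route. The paper never rewrites the body of $\pi$. It cuts once at the aim to pass from $\Pi\vdash\Sigma$ to $(\Pi\vdash\Sigma)[Y/A]$. It then copies $\pi$ verbatim up to each application of a contract rule $R'$, and replaces that application by one of $R'[Y/A]$, which has the same conclusion by Lemma~\ref{lem:C6}. Each premise $S_i[Y/A]$ is bridged back to the original $S_i$ by a further cut, recursing into nested dreams. You instead push the substitution up along parametric ancestors using C$_6$/C$_7$ of the base rules, and cut only where $A$ becomes principal, as in Belnap's parametric move. Your version yields cuts with $A$ principal in one premise. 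The paper's version needs no ancestor tracking and no closure property of the rules used inside $\pi$ beyond Lemma~\ref{lem:C6}.

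That difference bites in your contract-rule case, where one step does not go through as stated. Lemma~\ref{lem:C6} is a statement about rule schemata: the extractor metavariables of the outer rule do not occur in conclusions of contract rules. It does not say that the conclusion of an \emph{application} of $R'$ inside $\pi$ contains no occurrence of $A$ congruent to the outer one. Such occurrences can reach that conclusion through $R'$'s own metavariables after passing up through base rules.

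Concretely, take the calculus of Example~\ref{exe:inception_cut_elimination}.
\begin{itemize}
\item Instantiate $X$ in $R_0$ as $\hat{\Diamond}U$ with $A$ occurring in $U$; this is a C$_7$ situation, since $U$ sits in precedent position.
\item Witness the contract $[\hat{\Diamond}U\vdash\check{\Box}N]$ by applying $R_1$ with $K:=H:=U$, which gives $\hat{\Diamond}U\,\hat{\circ}\,U\vdash N$, and then the analytic rule $R$ with its metavariables instantiated as $U$ and $N$.
\item The occurrences of $A$ in the conclusion of $R_1$ are congruent to the one in the endsequent of $R_0$, via $R$ and via $X$ in $R_0$.
\end{itemize}
So in your construction the target at that node is $(\hat{\Diamond}U\,\hat{\circ}\,U\vdash N)[Y/A]$, not the unchanged conclusion. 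The fact that $R'[Y/A]$ has the same conclusion therefore does not close the case. The heuristic in your last paragraph, that substituted occurrences never flow into contract-rule conclusions, is exactly what fails here.

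The repair is easy. Either re-instantiate $R'$'s own metavariables with the substituted structures, which is fine for depth-$0$ contract rules and is covered by your outer induction on depth for deeper ones. Or cut just below such a node, which is precisely the paper's boundary cut. Either way the case must be handled explicitly, whereas the paper's placement of cuts avoids it altogether.
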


\begin{proof}
C$_1$ holds because structure variables which occur fresh within dreams are never instantiated.
 Our convention throughout the paper is that congruent parameters are denoted by the same letter. Hence,  C$_2$ holds. C$_3$ holds because, by first approximation, each extractor only occurs once in the conclusion of the main quasi-inequality generating each structural rule.
C$_4$ immediately follows from the definition of polarity-safe shape (cf.~Definition \ref{def:polarity_safe}).
 Conditions C$_5$ and C$_8$ hold trivially for the same reason they hold in proper display calculi. 
 
 Let us show  C$_6$ (the proof of C$_7$ being similar). It is enough to show that,  if $A \fCenter Y$ (resp.~$Y \fCenter A$) is derivable and
 $\left[ \Pi\fCenter\Sigma\right]^\mathcal{R}_\mathcal{X}$ is a contract which is witnessed by a cut-free derivation $\pi$, then $(\left[ \Pi\fCenter\Sigma\right]^\mathcal{R}_\mathcal{X})[Y/A]^{suc}$ can be fulfilled by a derivation $\pi'$ such that every instance of a cut rule in $\pi'$ has premises $A \fCenter Y$ and a sequent which is display-equivalent to some sequent from $\pi[Y/A]^{suc}$. Recall that $(\left[ \Pi\vdash \Sigma \right]^{\mathcal{R}}_{\mathcal{X}})[Y/A]^{suc}$ is defined as $\left[ (\Pi\vdash \Sigma) [Y/A]^{suc}\ \right]^{\mathcal{\overline{R}}}_{\mathcal{X}}$, with $\overline{\mathcal{R}} = \{R'[Y/A]^{suc}\ |\ R'\in\mathcal{R}\}$.

\begin{center}
\AXC{\ \ \ \ \ $\vdots$ \raisebox{1mm}{$\pi''$}}
\noLine
\UIC{$(S_i)[A]^{suc}$}
\noLine
\UIC{\ \ \ \ \ \ \ \ \ \ $\vdots$ \raisebox{1mm}{\fns display}}
\noLine
\UI$\Pi_i\fCenter A$
\AX$A\fCenter Y$
\RL{Cut}
\BI$\Pi_i\fCenter Y$
\noLine
\UIC{\ \ \ \ \ $\vdots$ \raisebox{1mm}{\fns display}}
\noLine
\UIC{$S_i[Y/A]^{suc}$}
\RL{$\overline{R} \in \overline{\mathcal{R}}$}
\UIC{$S$}
\noLine
\UIC{\ \ \ $\ddots\vdots\iddots$ \raisebox{1mm}{$\pi'$}}
\noLine
\UI$(\Pi \fCenter \Sigma)[A]^{suc}$
\noLine
\UIC{\ \ \ \ \ \ \ \ \ \ $\vdots$ \raisebox{1mm}{\fns display}}
\noLine
\UI$\Pi'\fCenter A$
\AX$A\fCenter Y$
\RL{Cut}
\BI$\Pi'\fCenter Y$
\noLine
\UIC{\ \ \ \ \ \ \ \ \ \ $\vdots$ \raisebox{1mm}{\fns display}}
\noLine
\UI$(\Pi \fCenter \Sigma)[Y/A]^{suc}$
\DP
\end{center}
Reasoning upwards from the conclusion, let us show that we can always construct the required witness of $(\left[ \Pi\fCenter\Sigma\right]^\mathcal{R}_\mathcal{X})[Y/A]^{suc}$ from a cut-free witness $\pi$ (see the sketch above) of $\left[ \Pi\fCenter\Sigma\right]^\mathcal{R}_\mathcal{X}$. In order to derive $(\Pi \fCenter \Sigma)[Y/A]^{suc}$, the first step is to reach $\Pi \fCenter \Sigma$ bottom-up by  displaying $Y$ in succedent position, applying cut with $A\vdash Y$  and applying the display rules in reverse order. This is always possible because we assumed $A \fCenter Y$ is derivable.  From $\Pi \fCenter \Sigma$ we can proceed upward following $\pi$, until we reach the endsequent $S$ of a rule $R \in \mathcal{R}$ (step $\pi'$). By Lemma \ref{lem:C6}, we can apply the corresponding rule $\overline{R} \in \overline{\mathcal{R}}$, the premises of which are of the form $S_i[Y/A]^{suc}$, where $S_i$ are the premises of $R$. From each $S_i[Y/A]^{suc}$ which is a premise, we can reach the corresponding $S_i$ by proceeding analogously to what we did to reach $\Pi\vdash \Sigma$; namely, by displaying $Y$, then cutting with $A$ and hence applying display rules in the reverse order. Each such $S_i$ is derivable using the rules in $\overline{\mathcal{R}}$, by recursively applying the procedure just discussed, and relying on $\pi$.  
The case in which $S_i[Y/A]^{suc}$ is a higher-order assumption is treated recursively. 
The process just described terminates since, by assumption, every derivation has finite dreams.
Since Cut was applied only between $A \fCenter Y$ and sequents (which are display-equivalent to some sequents) from $\pi[Y/A]^{suc}$,  C$_6$ is verified.
\end{proof}

\begin{thm}
Inception display calculi with ALBA-generated inception rules are cut-eliminable.
\end{thm}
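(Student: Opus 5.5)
The plan is to run a Belnap-style cut elimination argument, as in \cite{frittella2014multi} and \cite[Theorem A.7]{LEframes}, using Theorem \ref{thm:inception_preserves_C6_C7} as the main input. It suffices to show that every \emph{uppermost} cut (cf.~Definition \ref{def:cut_depth}) can be eliminated. Cuts inside dreams count as subderivations (cf.~discussion below Notation \ref{notation_dream}), and every derivation has finite dreams. So we can then eliminate all cuts by repeatedly removing uppermost ones, working from the leaves of the well-founded tree of (nested) subderivations.

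For an uppermost cut with cut formula $A$, premises $X \fCenter A$ (derived by $\pi_1$) and $A \fCenter Y$ (derived by $\pi_2$), I would argue by a lexicographic induction on the triple (inception depth of the cut, complexity of $A$, height of the cut). The height is the sum of the heights of $\pi_1$ and $\pi_2$, with witnesses of contracts counted as parts of the derivation. The cases follow Belnap's scheme.

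\emph{Principal case.} Both occurrences of $A$ are principal. Conditions C$_8$ and the display property (C$_1$--C$_5$ via Theorem \ref{thm:inception_preserves_C6_C7}) let us replace the cut by cuts on immediate subformulas of $A$. These are of lower complexity and are eliminable by the induction hypothesis. This case never involves inception rules, since inception rules are structural and introduce no formulas.

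\emph{Parametric case.} Say $A$ is parametric in $\pi_2$ (the other side is symmetric, using C$_7$). Following Belnap, I would trace the congruence class of $A$ (Definition \ref{def:congruent_structures}) up through $\pi_2$, including into dreams, and substitute $X$ for all congruent occurrences. For ordinary rules, the first clause of C$_6$/C$_7$ gives a legal rule instance directly. For an application of an inception rule, Lemma \ref{lem:C6} guarantees that no substitution affects the endsequents of contract-rules. The quasi-closure clause of Theorem \ref{thm:inception_preserves_C6_C7} then supplies a new witness $\pi'$ for the substituted contract. In $\pi'$, the only cuts are between $X \fCenter A$ and sequents display-equivalent to sequents of $\pi[X/A]$. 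At the leaves of the congruence class, where $A$ is principal or introduced by an axiom, we are left with cuts in which $A$ is principal in $\pi_2$. These reduce to the principal case or to cuts of smaller height.

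The main obstacle is showing that the new cuts created inside the rebuilt witnesses $\pi'$ are actually smaller in the induction measure. They have the same cut formula, and they live in a derivation whose shape differs from $\pi$, so the height does not obviously decrease. I would handle this in three steps. First, each such cut has one premise $X \fCenter A$, which is cut-free, and another premise that comes from a proper subderivation of $\pi$ (up to display moves). Second, these cuts are located strictly inside a dream, so $\pi$ is a witness for a contract of depth at most $d(\pi_2)-1$ when measured relative to the inception rule being processed. Third, the inception depth of each such cut is therefore strictly less than that of the original cut, or it is equal with strictly smaller height. This lets the outer induction on inception depth apply. Finiteness of dreams then makes the whole procedure terminate, which yields cut elimination.
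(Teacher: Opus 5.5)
Your proposal is correct and takes essentially the same route as the paper. You eliminate an uppermost cut by induction on its inception depth and replay Belnap's principal and parametric moves via C$_2$--C$_8$; the cuts peculiar to the inception setting, those created inside rebuilt witnesses through Lemma~\ref{lem:C6} and the quasi-closure clause, pair a premise of the original cut with a subderivation of a dream and so have strictly smaller inception depth, while your lexicographic triple just makes explicit the inner complexity/height induction that the paper delegates to the standard metatheorem. Two small slips: what falls below $d(\pi_2)$ is the inception depth of the witness $\pi$, not the depth of the contract (a depth-$n$ rule may carry depth-$n$ contracts, whose contract rules have depth at most $n-1$), and substituting $X$ for the precedent occurrence of $A$ in $\pi_2$ falls under C$_7$, not C$_6$.
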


\begin{proof}
Let $d$ be the inception depth of an uppermost cut (cf.~Definition \ref{def:cut_depth}). Let us show that the aforementioned uppermost cut can be eliminated. The proof proceeds by induction on $d$.
If $d = 0$ then the cut can be eliminated as in proper display calculi.

As to inductive case, if $d = k+1$, conditions C$_2$-C$_8$ allow us to follow every move (both principal and parametric) of the cut-elimination metatheorem (cf.~\cite{wansing1998displaying,LEframes,ucaaptt}), introducing new cuts inside the dream of some of the contracts while performing a parametric move. Let the cut sequents be $X \fCenter A$ and $A \fCenter Y$. Since every new cut introduced in a dream $\left[ \pi\right]$ by virtue of conditions C$_6$ and C$_7$ has either $X \fCenter A$ and sequents  from $\pi[X/A]^{suc}$ (modulo display equivalence) or $A \fCenter Y$ and sequents from $\pi[Y/A]^{pre}$ (modulo display equivalence) as cut sequents, each of these cuts has a cut inception depth less than or equal to $k$. Hence,  we can apply the induction hypothesis and recursively eliminate the newly introduced cuts.
\end{proof}

\begin{example} \label{exe:inception_cut_elimination}
Consider the $\mathrm{LE}$-logic in the language $\mathcal{F} = \{\wdia,\circ\}$, $\mathcal{G} = \{\wbox\}$ given by the basic $\mathrm{LE}$-logic extended with the axioms 

\[\wdia p \vdash \wbox(\wdia p \circ p) \quad\quad \Box(\Diamond p \circ p) \circ \Diamond p \fCenter p.\] 
The residuals of the connective $\circ$ are denoted as $/$ and $\backslash$ for the first and second coordinate, respectively; the residual of the connective $\wdia$ (resp.~$\wbox$) is denoted as $\blacksquare$ (resp.~$\Diamondblack$). As we have seen in Example \ref{ex:1}, the second axiom is equivalent to the analytic inception rule
\begin{center}
\begin{tabular}{cc}
\AXC{$Y\fCenter Z$}
\AXC{$\left[X\fCenter \WBOX N \right]^{\{R_1\}}_{\{N\}}$}
\LL{$R_0$}
\BIC{$X\ \hat{\circ}\ \hat{\Diamond} Y \fCenter Z$}
\DP
&
\AXC{$K\fCenter Z$}
\AXC{$H \fCenter Z$}
\LL{$R_1$}
\BIC{$\hat{\Diamond}K\ \hat{\circ}\ H \fCenter N$}
\DP
\end{tabular}
\end{center}
    \begin{center}
    \end{center}
while the first axiom is equivalent\footnote{We used the method described in \cite{ucaaptt} to obtain the rule. We omit its computation because it is not relevant for the current example.} to the analytic structural rule
\begin{center}
    \AXC{$\hat{\wdia} X\ \hat{\circ}\ X \vdash Y$}
    \LL{$R$}
    \UIC{$\hat{\wdia} X \vdash \check{\wbox} Y$}
    \DP
\end{center}
Consider the following derivation, where double lines denote repeated applications of display postulates and invertible logical introduction rules:
\begin{center}
    \AXC{$p\vdash p$}
    \UIC{$\hat{\wdia}p\vdash\wdia p$}
    \AXC{$p\vdash p$}
    \BIC{$\hat{\wdia} p\ \hat{\circ}\ p \vdash \wdia p \circ p$}
    \LL{$R$}
    \UIC{$\hat{\wdia} p \vdash \check{\wbox} (\wdia p \circ p)$}
    \UIC{$\hat{\wdia} p \vdash \wbox (\wdia p \circ p)$}
    
    \AXC{$p \vdash p$}
    \AXC{$\left[\pi\right]^{\{R_1\}}_{\{N\}}$}
    \RL{$R_0$}
    \BIC{$\wbox (\wdia p \circ p)\ \hat{\circ}\ \hat{\wdia} p \vdash p $}
    \UIC{$\wbox (\wdia p \circ p) \vdash \hat{\wdia} p\ \check{\backslash}\ p $}
    \LL{Cut}
    \BIC{$\hat{\wdia} p \vdash  \hat{\wdia} p\ \check{\backslash}\ p$}
    \UIC{$\hat{\wdia} p\ \hat{\circ}\ \hat{\wdia} p \vdash p$}
    \DP
\end{center}
where $\pi$ is the derivation
\begin{center}
    \AXC{$p \vdash p$}
    \AXC{$p \vdash p$}
    \LL{$R_1$}
    \BIC{$\hat{\wdia} p\ \hat{\circ}\  p\vdash N$}
    \doubleLine
    \UIC{$\wdia p \circ p\vdash N$}
    \UIC{$\wbox (\wdia p \circ p)\vdash \check{\wbox}N$}
    \DP
\end{center}
The cut formula $\wbox (\wdia p \circ p)$ is not principal on the right premise and it is not introduced by a logical rule, thus, we can eliminate the cut with a parametric step, substituting $\hat{\wdia} p$ for the cut formula in the derivation of the left cut premise. We obtain the derivation
\begin{center}
    \AXC{$p\vdash p$}
    \AXC{$\left[\pi'\right]^{\{R_1\}}_{\{N\}}$}
    \LL{$R_0$}
    \BIC{$\hat{\wdia} p\ \hat{\circ}\ \hat{\wdia}p \vdash p$}
    \UIC{$\hat{\wdia} p \vdash  \hat{\wdia} p\ \check{\backslash}\ p$}
    \UIC{$\hat{\wdia} p\ \hat{\circ}\ \hat{\wdia} p \vdash p$}
    \DP
\end{center}
where, following the proof in Theorem \ref{thm:inception_preserves_C6_C7}, $\pi'$ is
\begin{center}
    \AXC{$p\vdash p$}
    \UIC{$\hat{\wdia}p\vdash\wdia p$}
    \AXC{$p\vdash p$}
    \BIC{$\hat{\wdia} p\ \hat{\circ}\ p \vdash \wdia p \circ p$}
    \LL{$R$}
    \UIC{$\hat{\wdia} p \vdash \check{\wbox} (\wdia p \circ p)$}
    \UIC{$\hat{\wdia} p \vdash \wbox (\wdia p \circ p)$}
    \AXC{$p \vdash p$}
    \AXC{$p \vdash p$}
    \LL{$R_1$}
    \BIC{$\hat{\wdia} p\ \hat{\circ}\  p\vdash N$}
    \doubleLine
    \UIC{$\wdia p \circ p\vdash N$}
    \UIC{$\wbox (\wdia p \circ p)\vdash \check{\wbox}N$}
    \LL{Cut}
    \BIC{$\hat{\wdia} p \vdash \check{\wbox} N$}
    \DP
\end{center}
Notice how we eliminated the original cut at the price of creating another one inside the dream $\pi'$. But $\pi'$ has depth $0$, therefore we can safely eliminate this new cut following the standard non-inception metatheorem of cut elimination. 
\end{example}

\section{Conclusion} \label{sec:concl}

In this paper, the proof-theoretic framework for LE-logics developed in \cite{ucaaptt,ChnGrePalTzi21,LEframes}, aimed at capturing classes of axiomatic extensions of normal lattice-based logics by means of analytic calculi,  is extended from the class of analytic inductive inequalities \cite[Definition 2.14]{ChnGrePalTzi21} to the  class of inductive inequalities \cite[Definition 3.4]{conradie2019algorithmic} in arbitrary  LE-signatures. The approach adopted in the present paper builds on  \cite{ucaaptt,ChnGrePalTzi21,LEframes,DDG2022}, and hinges on the algorithmic generation of analytic structural rules equivalently capturing inductive LE-axioms using the algorithm ALBA, originally introduced for computing the first-order correspondents of inductive LE-axioms in arbitrary signatures, within a purely semantically motivated research line \cite{CoGhPa14}. Applying this approach to arbitrary inductive axioms results in the algorithmic generation of structural rules of display calculi which feature higher-order assumptions of a particular form discussed in Definition  \ref{def:depthrules}, referred to as {\em contracts} of positive depth. Since fulfilling this type of higher-order assumptions involves the existence of derivations of certain sequents, which might themselves involve rules featuring contracts as assumptions, structural rules with higher-order assumptions are referred to as {\em inception rules}, and {\em inception display calculi} have been introduced as proper display calculi augmented with inception rules. In this paper, we mainly focus on inception calculi augmented with ALBA-generated rules, for which we prove a Belnap-style cut elimination metatheorem, thus taking a first step towards extending the fully fledged theory developed in \cite{ucaaptt,ChnGrePalTzi21,LEframes} to inductive LE-axioms.

The inception framework opens up the possibility of adopting proof analysis methods for the study of certain inductive axiomatic extensions of $\mathrm{LE}$-logics. 
In \cite[Definition 7.2.5]{DeDomenico}, a subclass of inception rules (the {\em analytic inception rules}) is introduced, and proper inception calculi are defined as proper display calculi augmented with analytic inception rules. We conjecture that proper inception calculi of finite depth are precisely those that capture the LE-logics axiomatized by inductive axioms, and that these calculi are syntactically complete \cite{ChnGrePalTzi21}. Furthermore, we conjecture that the notion of proper inception calculus introduced in \cite{DeDomenico} coincides with the one proposed in the present paper.
Other interesting directions include comparing inception display calculi with labelled calculi expanded with systems of rules \cite{Negri14}, and considering inception calculi admitting non-well-founded nesting of dreams to capture fixed point logics, possibly extending the reach of standard cyclic sequent calculi \cite{BahWeh22}.

\bibliography{ref}
\bibliographystyle{plain}

\begin{appendix}
    
\section{Proper display calculi}\label{ap:display}

\begin{definition}
A proof system enjoys the {\em display property} if, for every sequent $\Pi \fCenter \Sigma$ and every substructure $\Upsilon$ of either $\Pi$ or $\Sigma$, the sequent $\Pi \fCenter \Sigma$ can be equivalently transformed, using the rules of the system, into a sequent which is either of the form $\Upsilon \fCenter \Xi$ or of the form $\Xi \fCenter \Upsilon$, for some structure $\Xi$. In the first case, $\Upsilon$ is displayed in {\em precedent position}, and in the second case, $\Upsilon$ is displayed in {\em succedent position}.  

If a structure $\Upsilon$ occurs in the sequent $\Pi \fCenter \Sigma$ in precedent (resp.~succedent) position we write $(\Pi \fCenter \Sigma)[\Upsilon]^{pre}$ (resp.~$(\Pi \fCenter \Sigma)[\Upsilon]^{suc}$).
\end{definition}
	
\begin{definition}
\label{def:ParametersCongruenceHistoryTree}
{\em Specifications} are instantiations of structure metavariables in the statement of a rule $R$. The {\em parameters} of $r \in R$ are {\em substructures} of instantiations of structure metavariables in the statement of $R$. A formula instance is {\em principal} in an inference $r \in R$ if it is not a parameter in the conclusion of $r$. Structure occurrences in an inference $r \in R$ are in the (symmetric) relation of {\em local congruence} if they instantiate the same metavariable in the statement of $R$. Therefore, the local congruence is a relation between specifications.
\end{definition}

We recall the conditions C$_1$-C$_8$ defining a {\em proper display calculus}. Our presentation closely follows \cite[Section 2.2]{ucaaptt}.

	\paragraph*{ C$_1$: Preservation of formulas.} This condition requires each formula occurring in a premise of a given inference to be a subformula of some formula in the conclusion of that inference. 
This condition is not included in the list of sufficient conditions of the cut elimination metatheorem (cf.~Theorem \ref{thm:meta}), but, in the presence of cut elimination, it guarantees the \emph{subformula property} of a proof system.
	\paragraph*{ C$_2$: Shape-alikeness.}
	This condition requires that locally congruent specifications are occurrences of the same structure. 

	\paragraph*{C$_3$: Non-proliferation.} 
 Condition C$_3$ requires that, for every inference, each of its specifications is locally congruent to at most one specification in the conclusion of that inference. 
	\paragraph*{C$_4$: Position-alikeness.} This condition bans any rule in which a (sub)structure in precedent (resp.~succedent) position in a premise is locally congruent to a (sub)structure in succedent (resp.~precedent) position in the conclusion.
	
	\paragraph*{C$_5$: Display of principal constituents.} This condition requires that any principal occurrence 
 be always either the entire antecedent or the entire consequent part of the sequent in which it occurs.
	
	%
	
	\paragraph*{C$_6$: Closure under substitution for succedent parameters.} This condition requires each rule to be closed under simultaneous substitution of arbitrary structures for formulas which are congruent parameters occurring in succedent position.
	Condition C$_6$ ensures, for instance, that if the following inference is an application of the rule $R$:
	
	\begin{center}
		\AX$(\Pi \fCenter \Sigma) \big([A]^{suc}_{i} \,|\, i \in I\big)$
		\RightLabel{$R$}
		\UI$(\Pi' \fCenter \Sigma') [A]^{suc}$
		\DisplayProof
	\end{center}
	
	\noindent and $\big([A]^{suc}_{i} \,|\, i \in I\big)$ represents all and only the occurrences of $A$ in the premiss which are congruent to the occurrence of $A$ in the conclusion, 
	then also the following inference is an application of the same rule $R$:
	
	\begin{center}
		\AX$(\Pi \fCenter \Sigma) \big([\Upsilon/A]^{suc}_{i} \,|\, i \in I\big)$
		\RightLabel{$R$}
		\UI$(\Pi' \fCenter \Sigma') [\Upsilon/A]^{suc}$
		\DisplayProof
	\end{center}
	
	\noindent where the structure $\Upsilon$ is substituted for $A$.

	\paragraph*{C$_7$: Closure under substitution for precedent parameters.} This condition requires each rule to be closed under simultaneous substitution of arbitrary structures for formulas which are congruent parameter occurring in precedent position.
	Condition C$_7$ can be understood analogously to C$_6$, relative to formulas in precedent position. 

	\paragraph*{C$_8$: Eliminability of matching principal constituents.}
	
	This condition  requests a standard Gentzen-style checking, which is limited to the case in which both cut formulas are {\em principal}, i.e.\ each of them has been introduced with the last rule application of each corresponding subdeduction. 

	\begin{thm} (cf.\ \cite[Section 3.3, Appendix A]{Wan02})
		\label{thm:meta}
		Any calculus satisfying conditions C$_2$, C$_3$, C$_4$, C$_5$, C$_6$, C$_7$, C$_8$ enjoys cut elimination. If C$_1$ is also satisfied, then the calculus enjoys the subformula property.
	\end{thm}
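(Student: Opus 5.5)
The statement to prove is Theorem A.1: any calculus satisfying C$_2$--C$_8$ enjoys cut elimination, and if C$_1$ also holds, the subformula property. I would follow the classical Belnap--Wansing strategy.

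I would first reduce to uppermost cuts. It suffices to show that any derivation ending in a single cut, whose two premise derivations $\pi_1$ (ending in $X \fCenter A$) and $\pi_2$ (ending in $A \fCenter Y$) are cut-free, can be transformed into a cut-free derivation of $X \fCenter Y$. Repeatedly applying this to topmost cuts then removes all cuts. I would prove the claim by a main induction on the complexity of the cut formula $A$, with a subsidiary argument on the shape of the premise derivations.

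\textbf{Case 1: $A$ is principal in both premises.} Condition C$_5$ guarantees that $A$ is displayed as the whole succedent of $X \fCenter A$ and the whole antecedent of $A \fCenter Y$. Condition C$_8$ then supplies a reduction to a derivation of $X \fCenter Y$ that uses only cuts on proper subformulas of $A$. These new cuts are eliminated by the main induction hypothesis, applied to uppermost cuts first.

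\textbf{Case 2: $A$ is parametric in one premise, say $\pi_2$.} This is the step I expect to be the main obstacle.
\begin{itemize}
\item I would use the congruence relation to trace the occurrence of $A$ in $A \fCenter Y$ upward through $\pi_2$. Conditions C$_2$ and C$_3$ ensure that the traced occurrences form a tree of occurrences of the same formula, with no proliferation downward. Condition C$_4$ ensures that they all remain in precedent position.
\item Each branch ends either in an axiom (where $A$ is introduced as an identity) or at an inference where $A$ becomes principal.
\item I would then substitute $X$ for every congruent occurrence of $A$ throughout $\pi_2$. Condition C$_7$ guarantees that every inference remains a legitimate instance of its rule.
\item At each leaf where $A$ was principal, the endsequent now has $X$ in place of $A$. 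I would restore it by cutting $X \fCenter A$ against the original sequent $A \fCenter Z$. By C$_5$ this $A$ is displayed, so display postulates make the cut well-formed.
\item Identity leaves become $X \fCenter A$ itself, so those leaves are replaced by $\pi_1$.
\end{itemize}

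The new cuts have the same cut formula, but in each of them $A$ is principal in the right premise. If $A$ is also principal in $\pi_1$, Case 1 applies. Otherwise I would apply the symmetric procedure to $\pi_1$ using C$_6$: trace $A$ upward in succedent position, substitute the other side of the cut, and cut again only at leaves where $A$ is principal. This yields cuts in which $A$ is principal on both sides, which Case 1 handles.

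Termination follows because the parametric moves only produce cuts of the same formula that are principal on at least one, and eventually both, sides. Case 1 strictly lowers formula complexity, so a lexicographic measure (complexity of $A$, then whether $A$ is principal in each premise) decreases at every step.

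Finally, for the subformula property: once cuts are eliminated, C$_1$ ensures that every formula in a premise of an inference occurs as a subformula of its conclusion. By induction downward along the derivation, every formula in a cut-free proof is a subformula of the endsequent.
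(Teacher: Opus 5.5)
Your proposal is correct and is essentially the standard Belnap--Wansing argument that the paper invokes by citation for this theorem. It has a principal stage handled by C$_8$ with induction on cut-formula complexity, and a parametric stage in which congruent occurrences of $A$ are traced upward (C$_2$--C$_4$), replaced via C$_6$/C$_7$, and re-cut at the C$_5$-displayed points where $A$ becomes principal, first on one side and then on the other. The only imprecision is your claim that every history branch ends at an axiom or at an inference where $A$ is principal: a branch can also end at a rule instance where $A$ is a parameter of the conclusion with no congruent ancestor (e.g.\ $\Pi \vdash \top$ with $A$ inside $\Pi$, or a weakening-like rule), but such leaves need only the substitution and no cut, so your procedure already covers them.
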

	
\begin{prop}\label{def:analytic}
		(cf.\ \cite[Definition 3.13]{CiRa14}) A structural rule which satisfies conditions C$_1$-C$_7$ is an \emph{analytic structural rule}.
	\end{prop}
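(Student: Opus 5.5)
Since this proposition essentially reconciles the condition-based notion of analyticity in \cite{CiRa14} with the syntactic notion of analytic structural rule from Section \ref{ssec:display}, I would prove it by showing that any structural rule $R$ satisfying C$_1$--C$_7$ can be presented as a schema
\[
(U_1\fCenter V_1)[\overline{\Upsilon}_1],\ \ldots,\ (U_n\fCenter V_n)[\overline{\Upsilon}_n]\ \big/\ (U_0\fCenter V_0)[\overline{\Upsilon}_0]
\]
in the sense of that section. I would take the conditions one at a time and extract a syntactic constraint from each.

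The first step is to show that $R$ is a genuine metastructure schema. Since $R$ is structural, no principal formula occurs in it. Hence every formula occurring in an instance of $R$ is a parameter, and so lies inside the instantiation of some metavariable. By C$_6$ and C$_7$, the set of instances is closed under simultaneous substitution of arbitrary structures for congruent formula parameters, in both succedent and precedent positions. I would then argue as follows. Take an instance in which each congruence class of parameters is a distinct fresh atomic formula. Its shape, with each such atom replaced by a metavariable $\Pi_i$ or $\Sigma_j$, is a pair of metastructures built only from structural connectives. Closure under substitution then shows that every instantiation of these metavariables is again an instance. The sort of each metavariable ($\mathsf{MVar}_{\mathcal{F}}$ versus $\mathsf{MVar}_{\mathcal{G}}$) is fixed by its position, and C$_4$ guarantees that this position is the same in premises and conclusion.

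The second step establishes the occurrence constraints. C$_2$ (shape-alikeness) ensures that locally congruent occurrences really are the same metavariable, so repeated metavariables in the premises are legitimate. C$_3$ (non-proliferation) says each specification is congruent to at most one specification in the conclusion, so each metavariable occurs at most once in $U_0\fCenter V_0$. C$_1$ (preservation of formulas) gives $\overline{\Upsilon}_0\supseteq\overline{\Upsilon}_1\cup\cdots\cup\overline{\Upsilon}_n$: if some metavariable occurred in a premise but not in the conclusion, instantiating it with a fresh formula would violate C$_1$. Together these give exactly the defining clauses of an analytic structural rule. C$_5$ is vacuous here because there are no principal formulas.

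The main obstacle is the first step, namely passing from closure under formula-for-structure substitution to a presentation by metavariables. The difficulty is that the rule might a priori constrain the internal shape of a parameter, for example by requiring a specific structural connective at the top. My approach is to pick a generic instance with atomic parameters and then use C$_6$/C$_7$ to recover every other instance, checking carefully that congruence classes coincide with metavariable occurrences.
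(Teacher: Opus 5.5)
The paper gives no proof of this statement. It is imported from \cite{CiRa14} as a definition: a structural rule is simply \emph{called} analytic when it satisfies C$_1$--C$_7$.

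You instead read it as a bridge to the syntactic notion of analytic structural rule from Section~\ref{ssec:display}. Your condition-by-condition matching for that reading is sound:
\begin{itemize}
\item C$_1$, applied to a fresh-atom instantiation, gives $\overline{\Upsilon}_0\supseteq\overline{\Upsilon}_1\cup\cdots\cup\overline{\Upsilon}_n$;
\item C$_3$ gives linearity of the conclusion;
\item C$_4$ gives consistency of positions, and hence of the sorts $\mathsf{MVar}_{\mathcal{F}}$ and $\mathsf{MVar}_{\mathcal{G}}$;
\item C$_2$ is automatic and C$_5$ is vacuous for structural rules.
\end{itemize}
What this buys is a check that the paper's two definitions of ``analytic structural rule'' (Section~\ref{ssec:display} and the appendix) agree in one direction. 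The easy converse would make them coincide. That converse is the direction the paper actually relies on when it applies the Belnap-style metatheorem to depth-$0$ rules.

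One correction: the step you flag as the main obstacle is not one. Parameters and local congruence are defined only relative to the structure metavariables in the statement of a rule (Definition~\ref{def:ParametersCongruenceHistoryTree}). So a structural rule is already a schema over metavariables. A structural connective sitting `on top of a parameter' belongs to the rule statement, not to the parameter. Consequently C$_6$ and C$_7$ hold trivially, and you do not need them to reconstruct the schema from a generic atomic instance.
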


\end{appendix}
\end{document}